\DeclareSymbolFont{rsfscript}{OMS}{rsfs}{m}{n}
\DeclareSymbolFontAlphabet{\mathrsfs}{rsfscript}
\def\softd{{\leavevmode\setbox1=\hbox{d}\hbox
            to 1.15\wd1{d\kern-0.2ex{\char039}\hss}}}    
\def\softl{l\kern-0.3ex\raise0.1ex\hbox{'}\kern-0.3ex}   
\numberwithin{equation}{section}
\newtheorem{Thm}{Theorem}[section]
\newtheorem{Prop}[Thm]{Proposition}
\newtheorem{Lemma}[Thm]{Lemma}
\newtheorem{Cor}[Thm]{Corollary}
\DeclareSymbolFont{rsfscript}{OMS}{rsfs}{m}{n}
\DeclareSymbolFontAlphabet{\mathrsfs}{rsfscript}
\theoremstyle{remark}
\newtheorem{Rmk}{Remark}
\newtheorem{Problem}{Problem}
\def\inv{^{-1}}
\def\wh#1{\widehat{#1}}
\def\til#1{\widetilde {#1}}
\def\ol#1{\overline{#1}}
\def\rb{{-\!\!\!-\!\!\!-\!\!\!-\!\!\!\!\!\rightarrow}}
\def\rbl{{-\!\!\!-\!\!\!-\!\!\!-\!\!\!-\!\!\!\!\!\rightarrow}}
\def\Ga{\Gamma}
\def\ga{\gamma}
\def\De{\Delta}
\def\Th{\Theta}
\def\fF{\mathfrak{F}}
\def\fS{\mathfrak{S}}
\def\vp{\varphi}
\def\vpl{\varprojlim}
\def\cA{\mathrsfs A}
\def\cH{\mathrsfs H}
\def\cK{\mathrsfs K}
\def\cR{\mathrsfs R}
\def\ka{\kappa}
\def\ga{\gamma}
\begin{document}

\title{The geometry of profinite graphs revisited}
\author{K.~Auinger}
\address{Fakult\"at f\"ur Mathematik, Universit\"at Wien,
Oskar-Morgenstern-Platz 1, A-1090 Wien, Austria}
\email{karl.auinger@univie.ac.at}

\subjclass[2010]{20E18, 20F65, 05C25}
\keywords{profinite group, profinite graph, formation of finite groups}
\begin{abstract}
For a formation $\mathfrak{F}$ of finite groups, tight connections are established between the pro-$\mathfrak{F}$-topology of a finitely generated free group $F$ and the geometry of the Cayley graph $\Ga(\wh{F_{\fF}})$ of the pro-$\mathfrak{F}$-completion $\wh{F_{\fF}}$ of $F$. For example, the Ribes--Zalesski{\u\i}-Theorem is proved for the pro-$\mathfrak{F}$-topology of $F$ in case $\Ga(\wh{F_{\fF}})$ is a tree-like graph. All these results are established by purely geometric proofs, more directly and more transparently than in earlier papers, without the use of inverse monoids.  Due to the richer structure provided by formations (compared to varieties), new examples of (relatively free) profinite groups with tree-like Cayley graphs are constructed. Thus, new topologies on $F$ are found for which the Ribes-Zalesski{\u\i}-Theorem holds.

\end{abstract}

\maketitle
\section{Introduction} The Ribes--Zalesski{\u\i}-Theorem \cite{RZ} states that the product $H_1\cdots H_n$ of any finitely generated subgroups $H_1,\dots, H_n$ of a free group $F$ is closed in the profinite topology of $F$. The original motivation for this theorem came from a paper by Pin and Reutenauer \cite{PR}: the theorem provided a nice algorithm to compute the closure (with respect to the profinite topology) of a rational subset of a free group and  implied the truth of the {Rhodes type II conjecture}, then a long standing conjecture in the theory of finite monoids. Since then, the product theorem has become a subject of independent interest. It has been generalized in various directions: on the one hand to other groups \cite{coulbois,you}, on the other hand to other topologies of $F$. The original proof of the theorem by Ribes and Zalesski{\u\i} is formulated for the (full) profinite topology of $F$ but is valid  for the pro-$\mathfrak{C}$-topology of $F$ for  any extension closed variety $\mathfrak{C}$ of finite groups (in the sense that the product $H_1\cdots H_n$ is closed with respect to the pro-$\mathfrak{C}$-topology of $F$ for pro-$\mathfrak{C}$-closed $H_1,\dots, H_n$). This was generalized by Steinberg and the author \cite{geometry} to so-called \emph{arboreous} varieties, a class of varieties which is much larger than the class of all extension closed varieties. In that paper, tight connections between the pro-$\mathfrak{C}$-topology of a free group and the geometry of the Cayley graphs of the free pro-$\mathfrak{C}$-groups were established. The purpose of the present paper is twofold. On the one hand, it is aimed at extending the mentioned results of \cite{geometry} from varieties to formations. This includes to expand the method found in \cite{geometry} to construct profinite groups with tree-like Cayley graphs. On the other hand, a new direct approach, based on covering graphs, to the entire topic is presented.
The proofs in \cite{geometry} use an interpretation of the subject within varieties of finite inverse monoids and their relatively free profinite objects. Meanwhile it is the author's opinion that this approach obstructs the direct view and mystifies the immediate connection between the pro-$\fF$-topology of $F$ and the geometry of $\Gamma(\wh{F_\fF})$. In contrast to \cite{geometry}, the present proofs are direct and purely  geometric, without the use of (relatively free, profinite) inverse monoids; this should raise the accessibility of the results to readers not familiar with inverse monoids.

The paper is organized as follows. Section \ref{graphs} collects all results about graphs, finitely generated subgroups of free groups and profinite graphs that will be used  in the paper. Section \ref{geomvstop} studies two geometric properties the Cayley graph of a free pro-$\fF$-group for a formation $\fF$ might have: that of being Hall and that of being tree-like. It is then shown that these geometric properties are equivalent to separability properties induced on a free group by its pro-$\fF$-topology; in section \ref{ribeszalesskii} we establish the Ribes-Zalesski{\u\i}-Theorem for the pro-$\fF$-topology of a free group where $\fF$ is any formation whose free profinite objects have tree-like Cayley graphs. Finally, in section \ref{S-extensions} we define, for a finite, $A$-generated group $G$ and a finite simple group $S$, the \emph{$A$-universal $S$-extension of $G$} and show how this concept can be used to construct profinite groups with tree-like Cayley graphs and to guarantee that for certain formations $\fF$ the free pro-$\fF$-groups have tree-like Cayley graphs; such formations will be called \emph{arboreous}.

\section{Graphs, subgroups of free groups, and profinite graphs}\label{graphs}
\label{sec1}
We follow the Serre convention~\cite{Serre} and
define a \emph{graph} $\Gamma$ to consist of a set $V(\Gamma)$ of
\emph{vertices} and disjoint sets $E(\Gamma)$ of \emph{positively
oriented (or positive) edges} and $E\inv(\Gamma)$ of
\emph{negatively oriented (or negative) edges} together with
\emph{incidence} functions $\iota,\tau:E(\Gamma)\cup
E(\Gamma)\inv\rightarrow V(\Gamma)$ selecting the \emph{initial},
respectively, \emph{terminal} vertex of an edge $e$ and mutually
inverse bijections (both written: $e\mapsto e\inv$) between
$E(\Gamma)$ and $E\inv(\Gamma)$ such that $\iota e\inv = \tau e$
for all edges $e$ (whence $\tau e\inv =\iota e$, as well).  We set
$\til {E(\Gamma)}=E(\Gamma)\cup E\inv (\Gamma)$ and call it the
\emph{edge set} of $\Gamma$. Given this definition of a graph the
notions of \emph{ subgraph (spanned by a set of edges)}, \emph{
morphism of graphs} and \emph{ projective limit of graphs} have the
obvious meanings. Edges are to be thought of geometrically: when
one draws an oriented graph, one draws only the edge $e$  and
thinks of $e^{-1}$ as being the same edge, but traversed in the
reverse direction.

A \emph{path} $p$ in a graph $\Gamma$ is a finite sequence $p=e_1\dots
e_n$ of consecutive edges, that is $\tau e_i = \iota e_{i+1}$ for
all $i$; we define $\iota p= \iota e_1$ to be the initial vertex
of $p$ and $\tau p=\tau e_n$ to be the terminal vertex of $p$.  A path is \emph{reduced} if it does not contain a segment of
the form $ee\inv$ for any edge $e$. We also consider an empty path at each vertex.   A path $p=e_1\dots
e_n$ is a \emph{circuit at (base point)} $v$ if $v=\iota p=\tau p$.
 A graph is \emph{connected} if any two vertices can be joined by a
path. A connected graph is a \emph{tree} if it does not contain a non-empty reduced circuit.

Throughout, $A$ shall denote a finite alphabet, that is, a finite set of symbols called \emph{letters}, usually $\vert A\vert \ge 2$;
we use $A\inv$ to denote a disjoint copy of
$A$ consisting of formal inverses $a\inv$ of the letters $a$ of $A$, and set  $\til A=A\cup A\inv$.
An \emph{$A$-labelled graph} is a graph together with a labelling function $\ell:\til E\to \til A$ such that $\ell(e)\in A$  and $\ell(e\inv)=\ell(e)\inv$ for each positive edge $e$. A morphism of labelled graphs is assumed to respect the labelling. Given a path $p=e_1\dots e_n$ in a labelled graph, the label $\ell(p)$ of that path is just $\ell(e_1)\cdots\ell(e_n)$.

If $V(\Gamma)$, $E(\Gamma)$ and $E(\Ga)\inv$  are topological spaces,
$\til{E(\Gamma)}$ is the topological sum of $E(\Gamma)$ and
$E\inv(\Gamma)$, and $\iota$, $\tau$ and $(\ )\inv$ (in both directions) are continuous,
then $\Gamma$ is called a \emph{topological graph}. A
\emph{profinite graph} is a topological graph $\Gamma$ which is a
projective limit of finite, discrete graphs. It is well known
\cite{Mel, RZ3} that $\Gamma$ is profinite if and only if
$V(\Gamma)$ and $\til {E(\Gamma)}$ are both compact, totally
disconnected Hausdorff spaces.
Morphisms between profinite graphs are always assumed to be
continuous. \emph{Subgraphs} of profinite graphs are understood in the category of profinite graphs: they must be closed as topological spaces. Moreover, a \emph{connected profinite graph} is one all of whose finite continuous quotients are connected as abstract graphs (such are termed ``profinitely connected'' in \cite{AWred, AWsurvey}). For more informations about profinite graphs the reader is referred to \cite{AWred, AWsurvey, RZ3} and \cite[Section 2]{geometry}.

The profinite graphs of primary interest are subgraphs of Cayley graphs of
 profinite groups where a \emph{profinite group} is
a compact, totally disconnected group, or, equivalently, a
projective limit of finite groups. We refer the reader
to~\cite{RZbook1} for basic definitions on
profinite and relatively free profinite groups.

 Let $A$ be an alphabet, $G$ be a group and $\vp:A\to G$ be a map.
Then the \emph{Cayley graph of $G$ with respect to $(A,\vp)$}, or, if the mapping $\vp$ is clear, \emph{the Cayley graph of $G$ with respect to $A$},
denoted by $\Gamma _A(G)$, has vertex set $G$, edge set $G\times
\til A$, incidence functions given by $\iota (g,a) = g$, $\tau
(g,a) = g(a\vp)$ and involution $(g,a)\inv = (g(a\vp),a\inv)$. We call $a\in \til A$ the \emph{label} of $(g,a)$. The edge $(g,a)$ is usually drawn and thought of as $\underset{g}{\bullet}\!\overset{a}{\rbl}\!\!\!\!\!\underset{g(a\vp)}{\bullet}$. Throughout this paper we consider $A$-generated groups $G$ for a fixed alphabet $A$ and the mapping $\vp:A\to G$ is never mentioned; this means that $G$ is generated by $A\vp$ and $A$ is then treated like a subset of $G$ though distinct elements of $\til A$ are \emph{a priori} not necessarily distinct elements of $G$. In this case, $\Ga_A(G)$ is always denoted $\Ga(G)$ and is connected. A morphism from the $A$-generated group $G$ to the $A$-generated group $H$ is always a morphism extending the mapping $a\mapsto a$ and hence is uniquely determined and surjective and is called the \emph{canonical morphism} $G\twoheadrightarrow H$. Given a canonical morphism of finite $A$-generated groups $\vp:G\twoheadrightarrow H$ we have a canonical morphism $\Ga(G)\twoheadrightarrow \Ga(H)$, usually also denote $\varphi$, of finite $A$-labelled graphs which maps each vertex $g$ to $g\vp$ and each edge $(g,a)$ to $(g\vp,a)$. Suppose that $\{(G_i,\vp_{ij})\mid i,j\in I\}$ is an inverse system of finite $A$-generated groups; then automatically $\{(\Ga(G_i),\vp_{ij})\mid i,j\in I\}$ is an inverse system of finite $A$-labelled graphs. Let $\mathcal{G}=\vpl_{i\in I}G_i$; then the Cayley graph of $\mathcal G$ is $\vpl_{i\in I}\Ga(G_i)$ which is a profinite graph having vertex set $\mathcal G$ and edge set  $\mathcal{G}\times \til A$ where the topology of the latter is just the product topology with $\til A$ considered to be discrete. Since $A$ is a fixed finite set we need to consider only countable inverse systems $\{(G_n,\vp_{nm})\mid n,m\in \mathbb{N},m\le n\}$ where the linking morphism $\vp_{nm}$ are not always explicitly mentioned. We note that the Cayley graph of an \emph{$A$-generated profinite group} $\mathcal G$ (which means that the abstract subgroup $\left<A\right>$ of $\mathcal G$ generated by $A$ is dense in $\mathcal G$) is connected as a profinite graph.

Throughout, $F$ stands for the free group with basis $A$, its elements are represented as words in the alphabet $\til A$.  It is well known that finitely generated subgroups $H$ of $F$ can be encoded in terms of finite, labelled, pointed graphs \cite{kapmas,MSW,Stallings}. Let $\cA$ be a finite $A$-labelled graph; $\cA$ is \emph{folded} if for every letter $a\in A$ and every vertex $v$ there exists at most one edge $e$ starting or ending at $v$ and having label $a$. In a folded graph, the letters from $\til A$ induce partial injective mappings on the vertex set, and for every vertex $v$ and every word $w$ in the letters of $\til A$ there there is at most one path starting at $v$ and having label $w$. From now on we assume all graphs to be folded. Suppose that $\cA$ has a distinguished vertex $b$ (the base point) and let $L(\cA,b)$ be the set of all elements $w\in F$ ($w$ given as a reduced word in $\til A$) such that $w$ labels a closed path at $b$ in $\cA$. Then $L(\cA,b)$ is a finitely generated subgroup of $F$.

A  graph $\cA$ with base point $b$ is \emph{reduced} if no vertex except perhaps $b$ has degree $1$ (where the degree of a vertex $v$ is the number of positive edges $e$ for which $\iota e =v$ or $\tau e =v$). For every finitely generated subgroup $H$ of $F$ then there is up to isomorphism exactly one finite, connected, $A$-labelled, reduced graph $\cA$ with base point $b$ such that $L(\cA,b)=H$. This graph  we shall usually denote $\cH$ with base point $b_\cH$. It can be obtained as follows.  The \emph{Schreier graph} $\Sigma(F,H,A)$   has vertex set the set $H{\setminus}F$ of all right cosets $Hg$ in $F$ with respect to $H$ and positive edges $\underset{Hg}{\bullet}\!\overset{a}{\rb}\!\underset{Hga}{\bullet}$ for $g\in F$ and $a\in A$; the edge set then can be identified with $H{\setminus}F\times \til A$. Let the \emph{core graph} $\mathrm{core}(\Sigma(F,H,A),H)$ of $\Sigma(F,H,A)$ with respect to the base point $H$ be the subgraph of $\Sigma(F,H,A)$ spanned by edges which are contained in a reduced closed path at vertex $H$. Then $\mathrm{core}(\Sigma(F,H,A),H)$ is isomorphic to $\cH$ and will be the called the \emph{core graph} of $H$. For a more constructive method to find $\cH$ the reader is referred to \cite{kapmas,MSW, Stallings}. In any case, the core graph of $H=L(\cA,b)$, $\cA$ finite, can be obtained from $\cA$ by removing finitely may vertices and edges from $\cA$ until the outcome is reduced.

Next let $\mathfrak{F}$ be a formation of finite groups; the pro-$\mathfrak{F}$-topology of $F$ has as neighborhood basis of $1$ the collection of all normal subgroups $N$ of $F$ for which $F/N\in \mathfrak{F}$.  The profinite group $\wh{F_{\fF}}:=\vpl_{F/N\in \mathfrak{F}}F/N$ is the free pro-$\fF$-group generated by $A$; the abstract subgroup of $\wh{F_{\fF}}$ generated by $A$ coincides with $F$ if and only if $F$ is residually $\fF$, that is, if and only if $\bigcap_{F/N\in \fF}N=\{1\}$. Given an $A$-generated group $G\in \fF$ and $w\in F$ it is often convenient to denote the image of $w$ under the canonical morphism $F\twoheadrightarrow G$ by $[w]_G$ (which means the \emph{value} of $w$ in $G$); similarly, given $\ga\in \wh{F_{\fF}}$, the image of $\ga$ under the canonical morphism $\wh{F_{\fF}}\twoheadrightarrow G$ will be denoted $[\ga]_G$.

A finite $A$-labelled graph $\cA$ is \emph{complete} if the degree of every vertex is $2\vert A\vert$. That is, every letter $a\in \til A$ induces a permutation of the set of vertices of $\cA$. The group generated by these permutations is the \emph{transition group} $T_{\cA}$ of $\cA$ which is an $A$-generated group. A finitely generated subgroup $H$ of $F$ is \emph{$\mathfrak{F}$-extendible} \cite{MSW} if the core graph $\cH$ of $H$ can be embedded into a finite complete graph $\ol{\cH}$ whose transition group $T_{\ol{\cH}}$ belongs to $\mathfrak{F}$. A key result from \cite{MSW} is Propositon 2.7: it  states that every pro-$\mathfrak{F}$-closed finitely generated subgroup $H$ of $F$ is $\mathfrak{F}$-extendible --- the result is formulated and proved for varieties of finite groups but the proof goes through verbally for the more general case of formations.

Suppose that the complete graph $\cA$ with distinguished vertex $b$ is connected; then there exists a unique graph morphism $\vp_{\cA}:\Ga(T_{\cA})\twoheadrightarrow \cA$ for which $1\vp_{\cA}=b$ which we call the \emph{canonical morphism} $\Ga(T_{\cA})\twoheadrightarrow \cA$. Indeed, it is well known and easy to see that the mapping $g\mapsto b\cdot g$ induces a graph morphism $\Ga(T_{\ol \cA})\twoheadrightarrow \cA$ and $\cA$ is isomorphic to the Schreier graph $\Sigma(T_\cA,H,A)$ where $H$ is the stabilizer of $b$ in $T_{\cA}$.
If $G$ is another $A$-generated group such that $\vp:G\twoheadrightarrow T_{\cA}$ then $\vp\vp_{\cA}:\Ga(G)\twoheadrightarrow \cA$ is the \emph{canonical morphism $\Ga(G)\twoheadrightarrow \cA$}.
Next let $\cA$ be an $A$-labelled graph with base point $b$ and $\ol \cA$ be a completion of $\cA$ (that is, $\cA$ is a subgraph of the complete graph $\ol\cA$). We have the canonical mapping $\vp_{\ol{\cA}}:\Ga(T_{\ol{\cA}})\twoheadrightarrow\ol{\cA}$. Now, the inverse image of $\cA$ under $\vp_{\ol{\cA}}$ in $\Ga(T_{\ol{\cA}})$ is a subgraph of $\Ga(T_{\ol{\cA}})$, not necessarily connected. But we let ${\cA}^{T_{\ol{\cA}}}$ be the connected component containing $1$ of this inverse image. More generally, let $G$ be $A$-generated with canonical morphism $\vp:G\twoheadrightarrow T_{\ol{\cA}}$; then we define $\cA^G$ to be the connected component containing $1$ of the inverse image of $\cA$ under the canonical map $\vp\vp_{\ol{\cA}}:\Ga(G)\twoheadrightarrow \ol{\cA}$. Then $\cA^G$ is a connected subgraph of $\Ga(G)$ and it is the subgraph of $\Ga(G)$ spanned by all edges that are contained in a path starting at $1$ and having label $w$, say,  for which there exists a path in $\cA$ starting at $b$ and having label $w$. The latter could be also taken as definition of the graph $\cA^G$; it actually makes sense for every $A$-generated group $G$, but the existence of a morphism $\cA^G\twoheadrightarrow \cA$ is not guaranteed in the general case. However, if there is a completion $\ol\cA$ of $\cA$ and $G\twoheadrightarrow T_{\ol\cA}$ then we do have a canonical graph morphism $\cA^G\twoheadrightarrow \cA$ mapping $1$ to $b$. For every $h\in L(\cA,b)$ the element $[h]_G$ is also mapped to $b$ by this morphism. Moreover, as a subgraph of $\Ga(G)$ the graph $\cA^G$ may be shifted by left multiplication by an element $g\in G$ to obtain $g\cA^G$. Then there is a canonical graph morphism $g\cA^G\twoheadrightarrow \cA$ which maps $g$ to $b$. Similarly, for every $h\in L(\cA,b)$, the element $g[h]_G$ is mapped to $b$ under this morphism.

If we are given an inverse system of $A$-generated groups
\begin{equation}\label{invGrou}
T_{\ol\cA}\overset{\vp_0}{\twoheadleftarrow} G_0\overset{\vp_1}{\twoheadleftarrow} G_1\overset{\vp_2}{\twoheadleftarrow} G_2\overset{\vp_3}{\twoheadleftarrow} \cdots
\end{equation}
then we get an inverse system of $A$-labelled graphs
\begin{equation}\label{invGra}
\cA\overset{\vp_{\ol\cA}}{\twoheadleftarrow}{\cA} ^{{T_{\ol\cA}}}\overset{\vp_0}{\twoheadleftarrow}{\cA}^{G_0}\overset{\vp_1}{\twoheadleftarrow}\cA^{G_1}
\overset{\vp_2}{\twoheadleftarrow}\cA^{G_2}\overset{\vp_3}{\twoheadleftarrow}\cdots\end{equation}
where in the latter sequence the mappings $\vp_{\ol\cA}$ and $\vp_i$ have to be understood as the appropriate restrictions to the graphs ${\cA}^{T_{\ol\cA}}$ and $\cA^{G_i}$. Altogether,  every inverse system of finite groups as in (\ref{invGrou}) leads to an inverse system  of finite graphs as in (\ref{invGra}) and for $\mathcal {G}=\vpl G_n$ we set $\cA^{\mathcal{G}}:=\vpl \cA^{G_n}$ which is a connected subgraph of the profinite Cayley graph $\Ga(\mathcal{G})$.
Assume that the abstract subgroup of $\mathcal G$ generated by $A$ is $F$.
Similarly as for the finite case, we have now a canonical morphism $\cA^{\mathcal{G}}\twoheadrightarrow \cA$ which maps every element of the closure $\ol{L(\cA,b)}$ of $L(\cA,b)$ in $\mathcal{G}$ to the base point $b$. In addition, for every $\ga\in \mathcal{G}$ there is a canonical graph morphism $\ga\cA^{\mathcal{G}}\twoheadrightarrow \cA$ which maps $\ga\ol{L(\cA,b)}$ to $b$.

As mentioned earlier, the graphs $\cA^{G_i}$  may be defined without referring to a completion $\ol\cA$ of $\cA$. The same holds for the projective limit $\cA^{\mathcal G}=\varprojlim \cA^{G_i}$. The profinite graph $\cA^{\wh{F_{\fF}}}$ defined in this fashion then admits a canonical graph morphism $\cA^{\wh{F_{\fF}}}\twoheadrightarrow\cA$ if and only if $\cA$ admits a completion $\ol\cA$ whose transition group $T_{\ol{\cA}}$ is in $\fF$. The `only if' direction of that claim --- which will not be used in the paper --- can be seen as follows. Take an inverse system $(G_n)_{n\in \mathbb{N}}$ with $G_n\in \mathfrak{F}$ such that $\wh{F_{\fF}}=\vpl G_n$. Then $\cA^{\wh{F_{\fF}}}=\vpl \cA^{G_n}$; hence the canonical morphism $\cA^{\wh{F_{\fF}}}\twoheadrightarrow \cA$ factors through $\cA^{G}$ for $G=G_n$ for some $n$.
Now let $H$ be the subgroup of $G$ consisting of all $[w]_G$ for $w\in L(\cA,b_\cA)$. The Schreier graph $\Sigma(G,H,A)=:\Sigma$ then is a complete graph which contains $\cA$ as subgraph. The transition group $T_\Sigma$ is a quotient of $G$, namely $T_\Sigma\cong G/{H_G}$ where $H_G$ is the core of $H$ in $G$.
The profinite graph $\cA^{\wh{F_{\fF}}}$ can be viewed as \emph{pro-$\fF$-universal covering graph of $\cA$}.

\section{Profinite topology and geometry of graphs}
For a certain class of formations $\mathfrak{F}$ of finite groups we establish tight connections between the pro-$\mathfrak F$-topology of a finitely generated free group $F$ and the geometry of the Cayley graph $\Gamma(\wh{F_\mathfrak{F}})$ of its pro-$\mathfrak F$-completion $\wh{F_\mathfrak{F}}$. This is an extension to formations of the results formulated and proved for varieties in \cite[Section 5]{geometry}. While the proofs of \cite{geometry} would carry over to the case of formations more or less easily, it is our explicit intention to present new proofs which avoid relatively free profinite inverse monoids and instead use the above mentioned concept of (profinite) covering graphs.
\subsection{Being Hall and being arboreous}\label{geomvstop}
According to a result of \cite{MSW} mentioned in section \ref{sec1}, for every formation $\mathfrak{F}$, every finitely generated pro-$\mathfrak{F}$-closed subgroup $H$ of $F$ is $\mathfrak{F}$-extendible. Here we shall present a criterion for an extendible group $H$ to be closed. It turns out that this can be expressed in terms of a geometric property of the pro-$\mathfrak{F}$-universal covering graph $\cH^{\wh {F_\fF}}$ of the core graph $\cH$ of $H$.

For this purpose, we modify and generalize the concept of \emph{Hall property} of a profinite graph introduced in \cite{geometry}. Let $\Ga$ be a connected profinite graph; a connected subgraph $\De$ is a \emph{Hall subgraph} of $\Ga$ if, whenever $\De$ contains the endpoints of a finite reduced path $p$ in $\Ga$ then $\De$ contains the (graph spanned by the) path $p$ itself. Note that, by definition, every connected profinite graph is a Hall subgraph of itself.  The concept of Hall subgraph allows to give a characterization of which  $\fF$-extendible subgroups $H$ of $F$ are pro-$\fF$-closed.
\begin{Thm}\label{Hall:individual} Let $\fF$ be a formation of finite groups. An $\fF$-extendible subgroup $H$ of $F$ is pro-$\fF$-closed if and only if $\cH^{\wh{F_\fF}}$ is a Hall subgraph of $\Ga(\wh{F_\fF}).$
\end{Thm}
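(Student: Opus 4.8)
The plan is to prove the two implications separately, throughout writing $\ol H=\bigcap_{F/N\in\fF}HN$ for the pro-$\fF$-closure of $H$ in $F$ (so $H$ is pro-$\fF$-closed exactly when $H=\ol H$), fixing a cofinal inverse system $\wh{F_\fF}=\vpl_n G_n$ with $G_n\in\fF$ and $N_n=\ker(F\twoheadrightarrow G_n)$, and exploiting that $\cH^{\wh{F_\fF}}=\vpl_n\cH^{G_n}$, where each $\cH^{G_n}\twoheadrightarrow\cH$ is a \emph{covering} of graphs (being a connected component of the preimage of $\cH$ under the covering $\Ga(G_n)\twoheadrightarrow\ol\cH$). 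Two elementary observations are used repeatedly: for a vertex $u$ of $\cH$ and a reduced word $t$ labelling a path $b_\cH\to u$ in $\cH$, the fibre over $u$ in $\cH^{G_n}$ equals $\{[ht]_{G_n}\mid h\in H\}$; and, since folding makes reading deterministic, a word is readable in $\cH$ from $b_\cH$ if and only if its free reduction is.

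For the \emph{if} direction, assume $\cH^{\wh{F_\fF}}$ is Hall and let $w\in\ol H$, given as a reduced word. Then $[w]_{G_n}\in[H]_{G_n}$, which is precisely the fibre over $b_\cH$ in $\cH^{G_n}$, for every $n$; hence $[w]_{\wh{F_\fF}}\in\cH^{\wh{F_\fF}}$, and trivially $1\in\cH^{\wh{F_\fF}}$. The reduced path $p$ from $1$ labelled $w$ thus has both endpoints in $\cH^{\wh{F_\fF}}$, so the Hall property forces $p\subseteq\cH^{\wh{F_\fF}}$. Applying the canonical morphism $\cH^{\wh{F_\fF}}\twoheadrightarrow\cH$ carries $p$ to a path in $\cH$ labelled $w$ whose two endpoints are $b_\cH$ (as $1$ and $[w]_{\wh{F_\fF}}$ lie over $b_\cH$); since coverings preserve reducedness this is a reduced closed path, whence $w\in L(\cH,b_\cH)=H$. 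Thus $\ol H=H$.

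For the \emph{only if} direction, assume $H=\ol H$ and let $p$ be a finite reduced path in $\Ga(\wh{F_\fF})$ from $\gamma$ to $\delta$, with $\gamma,\delta\in\cH^{\wh{F_\fF}}$ and label the reduced word $w$, so $\delta=\gamma[w]_{\wh{F_\fF}}$. Let $u_\gamma,u_\delta$ be the images of $\gamma,\delta$ in $\cH$ and choose reduced words $t_\gamma,t_\delta$ labelling paths $b_\cH\to u_\gamma$ and $b_\cH\to u_\delta$. At each level $n$ the fibre description gives $[\gamma]_{G_n}=[h_n t_\gamma]_{G_n}$ and $[\delta]_{G_n}=[h'_n t_\delta]_{G_n}$ for suitable $h_n,h'_n\in H$; comparing with $[\delta]_{G_n}=[\gamma]_{G_n}[w]_{G_n}$ yields $h_n t_\gamma w t_\delta^{-1}(h'_n)^{-1}\in N_n$, hence $t_\gamma w t_\delta^{-1}\in HN_n$. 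As this holds for all $n$ and the $N_n$ are cofinal, we get $t_\gamma w t_\delta^{-1}\in\ol H=H$.

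It remains to deduce $p\subseteq\cH^{\wh{F_\fF}}$, and this readability step is the heart of the matter. I would realise the universal cover of $\cH$ as the subtree $\cT$ of the tree $\Ga(F)$ spanned by all reduced words readable in $\cH$ from $b_\cH$; since that vertex set is closed under taking prefixes, $\cT$ is a connected, hence \emph{convex}, subtree of $\Ga(F)$. Now $[t_\gamma]_F\in\cT$, and from $t_\gamma w t_\delta^{-1}\in H$ the reduction of the readable walk $(t_\gamma w t_\delta^{-1})\,t_\delta$ shows that $[t_\gamma w]_F$ is readable, i.e.\ $[t_\gamma w]_F\in\cT$. Convexity then places the geodesic from $[t_\gamma]_F$ to $[t_\gamma w]_F$---which is labelled $w$---inside $\cT$, so $w$ is readable in $\cH$ from $u_\gamma$, ending at $u_\delta$. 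Finally, lifting this $\cH$-path through each covering $\cH^{G_n}\twoheadrightarrow\cH$, starting at $[\gamma]_{G_n}$, reproduces the image of $p$ and shows it lies in $\cH^{G_n}$ for every $n$, whence $p\subseteq\cH^{\wh{F_\fF}}$. The main obstacle is exactly this passage from the algebraic fact $t_\gamma w t_\delta^{-1}\in H$ to the geometric readability of $w$ along $\cH$; the convexity of $\cT$ in the ambient tree $\Ga(F)$ is what makes it work, and it is precisely here that the hypothesis that $H$ is closed is genuinely used.
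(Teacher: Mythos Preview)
Your proof is correct. The ``if'' direction coincides with the paper's. For the ``only if'' direction you take a genuinely different route: the paper argues by contrapositive, starting from a path witnessing failure of the Hall property, passing to a conjugate $K=v^{-1}Hv$ so that the offending path starts at $1$, and then constructing an explicit sequence $u_nu_0^{-1}\in K$ converging (in the pro-$\fF$-topology) to an element $wu_0^{-1}\notin K$. You instead prove the implication directly: from $\gamma,\delta\in\cH^{\wh{F_\fF}}$ you extract via the fibre description that $t_\gamma w t_\delta^{-1}\in\ol H=H$, and then convert this algebraic fact into the geometric readability of $w$ along $\cH$ by appealing to convexity of the universal cover $\cT\subseteq\Ga(F)$. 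Your approach is arguably cleaner---it avoids the conjugation trick and the sequence construction, and isolates precisely where closedness is used---while the paper's approach has the merit of explicitly exhibiting a limit point outside $H$. One minor slip: your stated equivalence ``a word is readable from $b_\cH$ if and only if its free reduction is'' fails in the backward direction in general (an unreadable letter may cancel), but you only use the forward direction, which does hold in folded graphs; also, you should assume your cofinal system satisfies $G_n\twoheadrightarrow T_{\ol\cH}$ so that the maps $\cH^{G_n}\twoheadrightarrow\cH$ are genuine coverings.
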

\begin{proof} Here we do not assume that $F$ is residually $\fF$. In particular we do not assume that  $F$ is canonically embedded in $\wh{F_\fF}$. For convenience, for a word $w\in F$, the value of $w$ in $\wh{F_\fF}$ will simply be denoted $[w]_\fF$ rather than $[w]_{\wh{F_\fF}}$.

Let $H$ be an  $\fF$-extendible subgroup of $F$ with core graph $\cH$. Suppose first that $\cH^{\wh{F_\fF}}$ is not a Hall subgraph of $\Ga(\wh{F_\fF})$. We need to show that $H$ is not pro-$\fF$-closed. Throughout the following proof we shall identify every path $p$ with the graph spanned by $p$. Let $\ga$ be a vertex of $\cH^{\wh{F_\fF}}$ for which there exists a reduced word $w\in F$ such that the path  $\ga\to \ga[w]_\fF$ in $\Ga(\wh{F_\fF})$, starting at $\ga$ and being labelled $w$, is outside $\cH^{\wh{F_\fF}}$ except for its endpoints.

Let $\ol{\cH}$ be a completion of $\cH$ with transition group $T$ in $\fF$ and let $G_0\twoheadleftarrow G_1\twoheadleftarrow\cdots$ be an inverse system of groups in $\fF$ such that $\varprojlim G_n=\wh{F_\fF}$ and $G_0\twoheadrightarrow T$, and such that, denoting for all $n$ the canonical projections $\wh{F_\fF}\twoheadrightarrow G_n$ by $\varphi_n$, we have that the path $\ga\varphi_0\to \ga\varphi_0[w]_{G_0}$ in $\Ga(G_0)$ labelled $w$ is outside $\cH^{G_0}$ (except for its endpoints). Let $v\in F$ be a reduced word labelling a path $1\to \ga\varphi_0$ running entirely inside $\cH^{G_0}$. Then, in $\Ga(\wh{F_\fF})$, the path $1\to [v]_\fF$ labelled $v$ runs inside $\cH^{\wh{F_\fF}}$ (being the lift of the corresponding path in $\cH^{G_0}$); in particular, $[v]_\fF\in \cH^{\wh{F_\fF}}$. Next, the path $[v]_\fF\to [vw]_\fF$ in $\Ga(\wh{F_\fF})$ labelled $w$ is outside $\cH^{\wh{F_\fF}}$ (except, perhaps, for its endpoints) since its projection in $\Ga(G_0)$ is outside $\cH^{G_0}$. We claim that $[vw]_\fF$ does belong to $\cH^{\wh{F_\fF}}$. Indeed, for each $n$ choose a word $p_n$ labelling a path $\ga\varphi_n\to (\ga[w]_\fF)\varphi_n=(\ga\varphi_n)[w]_{G_n}$ which runs entirely inside $\cH^{G_n}$. Since $[p_n]_{G_n}=[w]_{G_n}$ for all $n$, $\lim p_n=w$.  Each path $1\to[vp_n]_\fF$ in $\Ga(\wh{F_\fF})$ labelled $vp_n$ runs entirely in $\cH^{\wh{F_\fF}}$ being the lift of the corresponding path in $\cH^{G_n}$. Since $\cH^{\wh{F_\fF}}$ is closed it follows that $[vw]_\fF=\lim[vp_n]_\fF\in \cH^{\wh{F_\fF}}$. Hence we may consider the vertices $[v]_\fF$ and $[vw]_\fF$ instead of $\ga$ and $\ga[w]_\fF$.

Next, consider the group $K:=v\inv Hv$. Since conjugation is a homeomorphism of $F$, $H$ is closed if and only if $K$ is closed. It is clear that for the core graph $\cK$ of $K$ the equality $\cK^{\wh{F_\fF}}=v\inv \cH^{\wh{F_\fF}}$ holds. For $\cK^{\wh{F_\fF}}$ we have that $1,[w]_\fF\in \cK^{\wh{F_\fF}}$ but the path $1\to[w]_\fF$ in $\Ga(\wh{F_\fF})$ labelled $w$ is outside $\cK^{\wh{F_\fF}}$ (except for its endpoints). Finally, choose, for every $n$, a word $u_n$ which labels a path $1\to [w]_{G_n}$ which runs entirely inside $\cK^{G_n}$. Then $\lim u_n=w$ and $u_nu\inv_0\in K$ since this word labels a closed path at $1$ in $\cK^{G_0}$. On the other hand, $\lim u_nu_0\inv=wu_0\inv\notin K$: if $wu_0\inv$ were in $K$ then it would label a closed path in $\cK$ at base point $b_\cK$ and that path would lift to a path in $\cK^{G_0}$ starting at $1$. This however, this is not the case by construction of $w$.

Let conversely $H\le F$ be finitely generated, $\fF$-extendible with core graph $\cH$ and suppose that $\cH^{\wh{F_\fF}}$ is a Hall subgraph of $\Ga(\wh{F_\fF})$. Let $(w_n)_{n\in \mathbb{N}}$ be a sequence of elements of $H$ with $\lim w_n=w\in F$ with respect to the pro-$\mathfrak{F}$-topology of $F$. We need to show that $w\in H$.  Since $[w_n]_\fF\in \cH^{\wh{F_{\fF}}}$ for all $n$, also $[w]_\fF\in\cH^{\wh{F_{\fF}}}$. So, $\cH^{\wh{F_{\fF}}}$ contains $1$ and $[w]_\fF$ and therefore also the path $1\to [w]_\fF$ in $\Ga(\wh{F_\fF})$ labelled $w$. By the canonical map $\cH^{\wh{F_{\fF}}}\twoheadrightarrow\cH$, that path is mapped to a closed path at $b_\cH$ with label $w$ which means that $w\in H$.
\end{proof}

Of particular interest are formations $\fF$ for which \textbf{every} extendible subgroup $H$ of $F$ is closed. Let us call such a formation \emph{Hall}. More precisely,
a formation $\mathfrak{F}$ is \emph{Hall} if for every alphabet $A$ (with $\vert A\vert\ge 2$) every finitely generated, $\fF$-extendible subgroup $H$ of $F$ is pro-$\fF$-closed in $F$.
This is the straightforward generalization to formations of a notion originally introduced for varieties \cite{myRZ}. Exactly as in that special case, the property of being Hall of a formation $\mathfrak F$ can be expressed in terms of the geometry of the Cayley graphs of free pro-$\fF$-groups. A connected profinite graph $\Ga$ is an \emph{absolute Hall graph}, or, shortly a \emph{Hall graph} if every connected subgraph is a Hall subgraph of $\Ga$. If $\Ga(\wh{F_\fF})$ is a Hall graph then by Theorem \ref{Hall:individual}, every $\fF$-extendible subgroup $H$ of $F$ is pro-$\fF$-closed. Also the converse holds: if $\Ga(\wh{F_\fF})$ is not Hall then there exists an $\fF$-extendible subgroup $H$ of $F$ which is not pro-$\fF$-closed. This leads to a `global' version of Theorem \ref{Hall:individual}; for varieties this was proven in \cite{geometry}

\begin{Thm}\label{Hallformations} A formation $\mathfrak{F}$ is Hall if and only if for each alphabet $A$ the Cayley graph $\Ga(\wh{F_{\fF}})$ is Hall.
\end{Thm}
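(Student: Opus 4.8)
The plan is to prove the two implications separately, the forward one being essentially a reformulation of Theorem~\ref{Hall:individual} and the reverse one carrying the actual content. For the direction ``$\Ga(\wh{F_\fF})$ Hall $\Rightarrow\mathfrak F$ Hall'' I would argue exactly as in the paragraph preceding the statement: given a finitely generated $\fF$-extendible subgroup $H$ of $F$, the graph $\cH^{\wh{F_\fF}}$ is a connected subgraph of $\Ga(\wh{F_\fF})$, hence a Hall subgraph by hypothesis, so $H$ is pro-$\fF$-closed by Theorem~\ref{Hall:individual}; as this holds for every alphabet $A$, the formation $\mathfrak F$ is Hall.

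For the converse I would prove the contrapositive: assuming $\Ga(\wh{F_\fF})$ fails to be a Hall graph for some alphabet $A$, I would manufacture a finitely generated $\fF$-extendible subgroup $H$ of $F$ that is not pro-$\fF$-closed. Fix a connected subgraph $\Delta$ and a finite reduced path $p$ with reduced label $w$ whose endpoints lie in $\Delta$ but with $p\not\subseteq\Delta$; after left-translating by the inverse of the initial vertex of $p$ (a graph automorphism of $\Ga(\wh{F_\fF})$) I may assume that $p$ runs from $1$ to $[w]_\fF$, so that $1,[w]_\fF\in\Delta$. Choose an inverse system $(G_m)$ in $\mathfrak F$ with $\varprojlim G_m=\wh{F_\fF}$, write $\Delta_m$ for the image of $\Delta$ in $\Ga(G_m)$, and recall that $\Delta=\varprojlim\Delta_m$ with every $\Delta_m$ connected --- this is the one place where the profinite connectedness of $\Delta$ is essential. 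Since some edge of $p$ lies outside the closed set $\Delta$, for all sufficiently large $n$ the $w$-path $p_n$ from $1$ in the finite Cayley graph $\Ga(G_n)$ carries an edge outside $\Delta_n$. I then set $H:=L(\Delta_n,1)$, which is finitely generated and $\fF$-extendible, its core graph $\cH$ embedding into the complete graph $\Ga(G_n)$ whose transition group $G_n$ lies in $\mathfrak F$.

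It remains to show that $H$ is not pro-$\fF$-closed, and this is the step I expect to be the main obstacle. Because each $\Delta_m$ is connected and contains $1$ and $[w]_{G_m}$, I may pick for every $m\ge n$ a reduced word $u_m$ labelling a path inside $\Delta_m$ from $1$ to $[w]_{G_m}$; writing $u:=u_n$, the concatenation $u_mu^{-1}$ traces a closed path at $1$ inside $\Delta_n$ and hence lies in $H$, while $[u_m]_{G_k}=[w]_{G_k}$ for $m\ge k$ forces $u_m\to w$, so $u_mu^{-1}\to wu^{-1}$ in the pro-$\fF$-topology and thus $wu^{-1}\in\ol H$. The delicate point is to verify that $wu^{-1}\notin H$ in spite of the free cancellation in the product: here I would exploit that $u$ labels a path lying entirely in $\Delta_n$ whereas $p_n$ carries an edge outside $\Delta_n$. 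Writing $w=ps$ and $u=qs$ with $s$ the longest common suffix, the reduced form of $wu^{-1}$ is $pq^{-1}$; by foldedness the $s$-suffix, read backwards from $[w]_{G_n}$, coincides with a terminal segment of the $u$-path and so lies in $\Delta_n$, whence the offending edge of $p_n$ survives in the $p$-portion of the reduced loop. Therefore the reduced path at $1$ labelled $wu^{-1}$ leaves $\Delta_n$, so $wu^{-1}\notin L(\Delta_n,1)=H$. Consequently $H$ is $\fF$-extendible but not pro-$\fF$-closed, so $\mathfrak F$ is not Hall, which completes the contrapositive.
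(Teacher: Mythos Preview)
Your proposal is correct and follows essentially the same approach as the paper: reduce to a witnessing path starting at $1$, project to a suitable finite quotient, take $H=L(\Delta\varphi_n,1)$, and exhibit a sequence $u_mu_n^{-1}\in H$ converging to $wu_n^{-1}\notin H$. The only difference is cosmetic: the paper normalizes so that the entire $w$-path in $\Gamma(G_0)$ lies outside $\Delta\varphi_0$ except for its endpoints (which makes $wu_0^{-1}$ automatically reduced and the non-membership immediate), whereas you keep only one offending edge and compensate with the common-suffix cancellation argument --- both are valid.
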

\begin{proof} The `if' part is a consequence of Theorem \ref{Hall:individual}. For the converse, let $\De\subseteq\Ga(\wh{F_{\fF}})$ be a connected subgraph containing the endpoints $\ga$ and $\ga[w]_\fF$ of some  reduced path $p$ whose label is $w$, say, such that the graph spanned by $p$ is outside $\De$ except for its endpoints.  We may consider instead the graph $\ga\inv\De$ and hence assume that $\ga=1$. As in the proof of Theorem \ref{Hall:individual}, let $G_0\twoheadleftarrow G_1\twoheadleftarrow \cdots$ be an inverse system of groups in $\fF$ such that $\varprojlim G_n=\wh{F_\fF}$ and, denoting the canonical morphism $\wh{F_\fF}\twoheadrightarrow G_n$ by $\varphi_n$ we have that the graph spanned by the path in $\Ga(G_0)$ with label $w$  starting at $1$ is outside $\De\varphi_0$ except for the endpoints $1$ and $[w]_{G_0}$. For every $n$, let $u_n$ be a word labelling a path $1\to [w]_{G_n}$ running inside $\De\varphi_n$; then $\lim u_n=w$. The group $H=L(\De\varphi_0,1)$ is $\fF$-extendible; since $u_n$ also labels a path $1\to [w]_{G_0}$ in $\De\varphi_0$ it follows that $u_nu_0\inv\in H$ for every $n$ and $\lim u_nu_0\inv=wu_0$. But $wu_0\inv$ does not belong to $H$ since $wu_0\inv$ does not label a closed path at $1$ in $\De\varphi_0$.
Altogether, $H$ is not closed for the pro-$\fF$-topology.
\end{proof}

Next we consider a strengthening of the former property, this time motivated from the side of the geometry of the Cayley graph of $\wh{F_\fF}$. An obvious strengthening of the property of being Hall is that any two vertices $u$ and $v$ which are contained in a connected subgraph should be contained in a smallest (with respect to containment) connected subgraph. Equivalently, for  any two given vertices contained in a connected subgraph, the intersection of all such connected subgraphs is again connected. A connected profinite graph with this property has been called \emph{tree-like} in \cite{geometry}; pro-$p$-trees  and similarly defined graphs in the sense of \cite{RZ,RZ2,RZ3} do have this property but there are many tree-like (Cayley) graphs which are not of this form (including some new examples which can be constructed by the method presented in section \ref{S-extensions}). We are going to show that this geometric property of the Cayley graph is equivalent to a strengthened condition on the pro-$\fF$-topology of the free group $F$: namely that the product $HK$ of any two $\fF$-extendible subgroups $H$ and $K$ of $F$ is pro-$\fF$-closed. Throughout, for vertices $\alpha, \beta$ of a tree-like graph $\Ga$ we shall denote by $[\alpha,\beta]$ the unique smallest connected subgraph of $\Ga$ containing $\alpha$ and $\beta$  ---  the \emph{geodesic} subgraph determined by $\alpha$ and $\beta$.

\begin{Prop}\label{tlidc} Suppose that $\Ga(\wh{F_{\fF}})$ is tree-like; then the product $HK$ of any two $\fF$-extendible subgroups $H$ and $K$ of $F$ is closed in the pro-$\fF$-topology of $F$.
\end{Prop}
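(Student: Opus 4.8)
The plan is to reduce the assertion to a profinite factorization statement and then to exploit the tree-like geometry to convert a factorization in $\wh{F_\fF}$ into an honest one over $F$. Since the tree-like property strengthens the Hall property, $\Ga(\wh{F_\fF})$ is in particular a Hall graph, so Theorem \ref{Hallformations} shows that $\fF$ is Hall and hence, by Theorem \ref{Hall:individual}, that $H$ and $K$ are themselves pro-$\fF$-closed. Writing $\ol H,\ol K$ for the closures of $H,K$ in $\wh{F_\fF}$, the set $\ol H\,\ol K$ is compact, hence closed, and it is readily checked to equal the closure of $HK$; therefore $HK$ is pro-$\fF$-closed exactly when every $w\in F$ with $[w]_\fF\in\ol H\,\ol K$ already belongs to $HK$. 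So I fix such a $w$ and write $[w]_\fF=\al\beta$ with $\al\in\ol H$ and $\beta\in\ol K$. As $K$ is closed, it will suffice to produce some $h\in H$ with $[h]_\fF\in[w]_\fF\,\ol K$: then $h\inv w\in F\cap\ol K=K$, and $w=h(h\inv w)\in HK$.

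Next I pass to the geometry. Put $\Lambda_1:=\cH^{\wh{F_\fF}}$ and $\Lambda_2:=[w]_\fF\cK^{\wh{F_\fF}}$, with their canonical morphisms onto $\cH$ and $\cK$. Because $\al\in\ol H$, the vertex $\al$ lies in the fibre of $\Lambda_1$ over $b_\cH$; and since $\al=[w]_\fF\beta\inv$ with $\beta\in\ol K$, it also lies in the fibre of $\Lambda_2$ over $b_\cK$. Thus $\al$ is a common \emph{turning point} of these two connected subgraphs, with $1\in\Lambda_1$ and $[w]_\fF\in\Lambda_2$. Using tree-likeness, the geodesic $[1,[w]_\fF]$ coincides with the reduced path labelled $w$: it is contained in that path by minimality, and, the ambient graph being Hall, the geodesic is itself a Hall subgraph and so contains it. Its vertices are therefore the values $[u]_\fF$ of the prefixes $u$ of $w$. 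Since $[1,\al]\subseteq\Lambda_1$ and $[\al,[w]_\fF]\subseteq\Lambda_2$ together cover this geodesic, there is a vertex $\mu=[u]_\fF$ on it (the median of $1,\al,[w]_\fF$), giving a factorisation $w=uu'$, such that $[1,\mu]\subseteq\Lambda_1$ and $[\mu,[w]_\fF]\subseteq\Lambda_2$; in particular $\mu\in\Lambda_1\cap\Lambda_2$. Pushing the two halves through the canonical morphisms, $u$ labels a path $b_\cH\to x$ in $\cH$ and $u'$ labels a path $y\to b_\cK$ in $\cK$, where $x,y$ denote the images of $\mu$.

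It remains to realise the off-geodesic excursion $[\mu,\al]\subseteq\Lambda_1\cap\Lambda_2$ by a genuine word. I fix an inverse system $\wh{F_\fF}=\vpl G_n$ with all $G_n\in\fF$ and project the whole configuration into a single $\Ga(G_n)$. The image of $[\mu,\al]$ is then a \emph{finite} path lying in $\cH^{G_n}\cap\bigl([w]_{G_n}\cK^{G_n}\bigr)$; let $\ell$ be its label. The canonical morphisms $\cH^{G_n}\to\cH$ and $[w]_{G_n}\cK^{G_n}\to\cK$ factor the corresponding morphisms out of $\wh{F_\fF}$, so they are compatible with the images $x,y$ of $\mu$ and send $\al$ to $b_\cH$ and to $b_\cK$ respectively. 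Hence $\ell$ labels a path $x\to b_\cH$ in $\cH$ and a path $y\to b_\cK$ in $\cK$. Consequently $u\ell$ labels a closed path at $b_\cH$, so $h:=u\ell\in H$; likewise $\ell\inv u'$ labels a closed path at $b_\cK$, so $k:=\ell\inv u'\in K$; and $hk=uu'=w$, as required.

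\emph{Main obstacle.} The crux is this final rationalisation. The prefix/suffix decomposition $w=uu'$ by itself never witnesses the factorisation, since a genuine $w=hk$ generically involves cancellation between $h$ and $k$; accordingly the turning point $\al$ sits off the geodesic $[1,[w]_\fF]$, and the connecting segment $[\mu,\al]$ a priori carries only the profinite label $\mu\inv\al\in\wh{F_\fF}$. It is precisely tree-likeness that forces this excursion into $\Lambda_1$ \emph{and} $\Lambda_2$ simultaneously, while finiteness of the core graphs $\cH$ and $\cK$ is what allows one finite word $\ell$ to reproduce its reading in both. The two points I expect to require care are the clean existence of the splitting vertex $\mu\in\Lambda_1\cap\Lambda_2$ on the geodesic (a median argument in the tree-like graph) and the verification that $[\mu,\al]$ really projects into $\cH^{G_n}\cap[w]_{G_n}\cK^{G_n}$ for suitable $n$.
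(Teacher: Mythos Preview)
Your proof is correct and follows essentially the same route as the paper's: your $\alpha$, $\Lambda_1$, $\Lambda_2$, $\mu$, and $\ell$ correspond precisely to the paper's $\eta$, $\cH^{\wh{F_\fF}}$, $\eta\cK^{\wh{F_\fF}}$, $v$, and $s$, and the concluding factorisation $w=(u\ell)(\ell^{-1}u')$ is exactly the paper's $w=vs\cdot s^{-1}u$. The only cosmetic differences are that you set things up via compactness of $\ol H\,\ol K$ rather than via convergent sequences, and you shift $\cK^{\wh{F_\fF}}$ by $[w]_\fF$ instead of $\eta$ (which is harmless since $\eta^{-1}[w]_\fF\in\ol K$).
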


\begin{proof} Let $(w_n)$ be a sequence in $HK$ such that $\lim w_n=w\in F$; we need to prove that $w\in HK$. Each $w_n$ admits a representation $w_n=h_nk_n$ with $h_n\in H$ and $k_n\in K$  (equality holds in $F$, so the word $h_nk_n$ need not be reduced). We may assume (by going to a subsequence) that the sequences $(h_n)$ and $(k_n)$ converge in $\wh{F_{{\fF}}}$: there exist $\eta,\ka\in \wh{F_{{\fF}}}$ such that $\eta=\lim h_n$ and $\ka=\lim k_n$. Then $\eta\kappa=w$ and we may assume that $\eta\ne 1\ne w$. Let $\cH$ and $\cK$, respectively, be the core graphs of $H$ and $K$. By construction, $\cH^{\wh{F_{\fF}}}$ contains $1$ and all vertices $h_n$ and hence also $\eta=\lim h_n$. Likewise, $\cK^{\wh{F_{\fF}}}$ contains $1$ and $\kappa$. Consequently, $\cH^{\wh{F_{\fF}}}\cup\eta\cK^{\wh{F_{\fF}}}$ contains $1$ and $w=\eta\kappa$. By the Hall property, $\cH^{\wh{F_{\fF}}}\cup\eta\cK^{\wh{F_{\fF}}}$ contains the geodesic $[1,w]$. Since $1\in \cH^{\wh{F_{\fF}}}$ and $w\in \eta\cK^{\wh{F_{\fF}}}$, some vertex, say $v$, of the geodesic $[1,w]$ is contained in $\cH^{\wh{F_{\fF}}}\cap\eta\cK^{\wh{F_{\fF}}}$. That is, $w$ admits a factorization $w=vu$ such that $v\in \cH^{\wh{F_{\fF}}}\cap\eta\cK^{\wh{F_{\fF}}}$. Since $\Ga(\wh{F_\fF})$ is tree-like, we have
$$[1,v]\subseteq \cH^{\wh{F_\fF}},\quad [v,\eta]\subseteq \cH^{\wh{F_{\fF}}}\cap \eta\cK^{\wh{F_{\fF}}},\quad [v,w]\subseteq \eta\cK^{\wh{F_\fF}}.$$
\begin{figure}[ht]
\begin{tikzpicture}
\draw[thick] (0,1) to [out=90,in=180] (2,2) to [out=0,in=90] (4,1) to [out=-90,in=0]  (2,0) to [out=180,in=-90] (0,1);
\draw[thick] (2.5,1) to [out=90,in=180] (4.5,2) to [out=0,in=90] (6.5,1) to [out=-90,in=0]  (4.5,0) to [out=180,in=-90] (2.5,1);
\filldraw (1,1) circle (2pt); \filldraw (3.25,1) circle (2pt); \filldraw (3.25,0.6) circle (2pt);\filldraw (5.5,1) circle (2pt);
\node[below] at (1,1) {$1$}; \node[below] at (5.5,1) {$w$}; \node[above] at (3.25,1) {$\eta$}; \node[below] at (3.25,0.6) {$v$};
\draw (1,1) -- (3.25,0.6) -- (5.5,1);
\end{tikzpicture}
\caption{The graph $\cH^{\wh{F_{\fF}}}\cup\eta\cK^{\wh{F_{\fF}}}$.}
\end{figure}

Now consider a finite quotient $\wh{F_\fF}\overset{\vp}{\twoheadrightarrow} G$ such that:
\begin{itemize}
\item $G\twoheadrightarrow T_{\ol{\cH}}$ and $G\twoheadrightarrow T_{\ol{\cK}}$,
\item $1\ne \eta\vp\ne w\vp$,
\item $\vp$ restricted to $[1,w]$ is injective,
\end{itemize}
where $\ol\cH$ and $\ol\cK$ are completions of $\cH$ and $\cK$, respectively, with transition groups in $\fF$.  Then we have
$$[v,\eta]\vp\subseteq  (\cH^{\wh{F_{\fF}}})\vp=\cH^G
\mbox{ and }
[v,\eta]\vp\subseteq  (\eta\cK^{\wh{F_{\fF}}})\vp=(\eta\vp)\cK^G.$$
Choose a path $p:v\vp\to {\eta}\vp$ which runs entirely inside $[v,\eta]\vp$ and suppose its label is $s$. Then the word $vs$ labels a path $1\to{\eta}\vp$ running inside $\cH^G$. Since the projection $\cH^{\wh{F_{\fF}}}\twoheadrightarrow\cH$ maps both $1$ and $\eta$ to the base point $b_\cH$ of $\cH$  so does the projection $\cH^G\twoheadrightarrow \cH$:  $1$ and ${\eta}\vp$ are both mapped to the base point $b_\cH$. It follows that $vs\in H$. By an analogous reasoning: $s\inv u$ labels a path ${\eta}\vp\to w\vp$ running inside $(\eta\vp)\cK^G$. The canonical projection $(\eta\vp)\cK^G\twoheadrightarrow \cK$ maps both ${\eta}\vp$ and $w\vp$ to the base point $b_\cK$ of $\cK$. Altogether, $s\inv u\in K$ and thus $w=vs\cdot s\inv u\in HK$, as required.
\end{proof}

Next we are going to prove the converse of Proposition \ref{tlidc}. We start with a definition which will be also used in Section \ref{S-extensions}. Let $\Ga$ be a (pro)finite graph with distinguished vertex $1$; a \emph{constellation} in $\Ga$ is a triple $(\Xi,\ga,\Theta)$ where
\begin{enumerate}
\item $\ga$ is a vertex of $\Ga$, $\Xi$ and $\Theta$ are connected subgraphs of $\Ga$,
\item $1,\ga\in \Xi\cap \Theta$,
\item the connected $\Xi\cap \Th$-components of $1$ and $\ga$ are distinct.
\end{enumerate}
A constellation of an $A$-generated group then is a constellation of its Cayley graph. It is clear that the Cayley graph $\Ga(\mathcal G)$ of an $A$-generated pro-finite group $\mathcal G$ is tree-like if and only if it does not admit a constellation.

Let $G_0\twoheadleftarrow G_1\twoheadleftarrow G_2\twoheadleftarrow\cdots$ be an inverse system of $A$-generated finite groups with $\mathcal{G}=\vpl G_n$ with canonical projections $\mathcal{G}\overset{\vp_n}{\to}G_n$. Suppose that $\Ga(\mathcal G)$ admits a constellation $(\Xi,\ga,\Th)$. Then there exists a  positive integer $m$ such that $(\Xi\vp_m,\ga\vp_m,\Th\vp_m)$ is a constellation in $\Ga(G_m)$ (since the two $\Xi\cap\Th$-components of $1$ and $\ga$ may be separated by clopen subgraphs). Moreover, for every $n\ge m$, denoting by $\vp_{nm}$ the canonical morphism $G_n\to G_m$ we see that $(\Xi\vp_n,\ga\vp_n,\Th\vp_n)$ is a constellation in $\Ga(G_n)$ and
$$(\Xi\vp_m,\ga\vp_m,\Th\vp_m)=(\Xi\vp_n\vp_{nm},\ga\vp_n\vp_{nm},\Th\vp_n\vp_{nm}).$$

\begin{Prop}\label{rz2} If $\Ga(\wh{F_{\fF}})$ is not tree-like then there exist $\fF$-extendible subgroups $H$ and $K$ of $F$ for which the product $HK$ is not pro-$\fF$-closed in $F$.
\end{Prop}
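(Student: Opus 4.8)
\emph{The plan} is to invert the mechanism of Proposition \ref{tlidc}: there tree-likeness produced a common vertex through which a factorization could be threaded, so here the failure of tree-likeness --- recorded by a constellation --- is used to manufacture a product $HK$ together with an element of $F$ lying in its pro-$\fF$-closure but not in $HK$ itself.

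First I would fix a constellation $(\Xi,\ga,\Th)$ in $\Ga(\wh{F_\fF})$ and descend to the finite level. By the remark preceding the proposition there is an inverse system $G_m\twoheadleftarrow G_{m+1}\twoheadleftarrow\cdots$ of groups in $\fF$ with $\wh{F_\fF}=\vpl G_n$ and canonical maps $\vp_n$ such that, writing $\Xi_n=\Xi\vp_n$, $\Th_n=\Th\vp_n$ and $g_n=\ga\vp_n$, the triple $(\Xi_m,g_m,\Th_m)$ is a constellation in $\Ga(G_m)$; in particular $1$ and $g_m$ lie in distinct connected components of $\Xi_m\cap\Th_m$, so $g_m\ne 1$. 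I then set $H:=L(\Xi_m,1)$ and $K:=L(\Th_m,1)$; since $\Xi_m$ and $\Th_m$ are connected subgraphs of $\Ga(G_m)$ with $G_m\in\fF$, both $H$ and $K$ are $\fF$-extendible by the construction at the end of Section \ref{graphs}.

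Second I would produce the approximating sequence. For every $n\ge m$ choose reduced words $a_n,b_n$ labelling paths $1\to g_n$ inside $\Xi_n$, respectively inside $\Th_n$. Because $[a_n]_{G_N}=g_N=[\ga]_{G_N}$ for all $n\ge N$ (and likewise for $b_n$), both $a_n\to\ga$ and $b_n\to\ga$ in $\wh{F_\fF}$. Projecting along $\vp_{nm}$, the word $a_ma_n\inv$ labels a loop at $1$ in $\Xi_m$ and $b_nb_m\inv$ labels a loop at $1$ in $\Th_m$, so $h_n:=a_ma_n\inv\in H$ and $k_n:=b_nb_m\inv\in K$. Hence $w_n:=h_nk_n=a_ma_n\inv b_nb_m\inv\in HK$, and since $\lim_n a_n\inv b_n=\ga\inv\ga=1$ we obtain $w_n\to[a_m]_\fF[b_m]_\fF\inv=[a_mb_m\inv]_\fF$ in $\wh{F_\fF}$; thus, setting $w:=a_mb_m\inv\in F$, the sequence $(w_n)$ lies in $HK$ and converges to $w$ in the pro-$\fF$-topology of $F$.

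It remains --- and this is the main obstacle --- to show $w\notin HK$, which is exactly dual to the use of tree-likeness in Proposition \ref{tlidc}. Suppose $w=hk$ with $h\in H$, $k\in K$; then $h\inv w\in K$, so the reduced path $\rho$ traced by $h\inv w$ from $1$ in $\Ga(G_m)$ is a loop at $1$ lying in $\Th_m$. On the other hand, reading $h\inv w=h\inv a_m b_m\inv$ from $1$ first traces $h\inv a_m$, a walk $1\to g_m$ entirely inside $\Xi_m$ (as $h\inv$ labels a loop at $1$ in $\Xi_m$), and then $b_m\inv$, a reduced path $g_m\to 1$ inside $\Th_m$; let $R_1\subseteq\Xi_m$ be the reduction of the first walk. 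Then $\rho$ is the free reduction of $R_1$ followed by the $b_m\inv$-path, and the only cancellation is a maximal common backtrack $c$ at the junction $g_m$: writing $R_1=Sc$ and $b_m\inv$-path $=c\inv T$ with $\rho=ST$, the segment $S$ lies in $\Xi_m$ (being part of $R_1$) and in $\Th_m$ (being the initial part of $\rho$), while $c$ lies in $\Xi_m$ (part of $R_1$) and in $\Th_m$ (since $c\inv$ is the initial part of the $b_m\inv$-path). Hence $R_1=Sc$ is a path from $1$ to $g_m$ lying entirely in $\Xi_m\cap\Th_m$, contradicting that $1$ and $g_m$ sit in different components of $\Xi_m\cap\Th_m$. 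Thus no factorization $w=hk$ exists, so $w\notin HK$ and $HK$ is not pro-$\fF$-closed. The delicate point throughout is precisely this junction/backtracking analysis: one must show that whatever cancellation a hypothetical factorization introduces, the $\Xi_m$-portion of the loop is forced into $\Th_m$ as well, thereby connecting $1$ and $g_m$ inside the intersection.
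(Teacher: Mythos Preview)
Your proof is correct and follows the paper's strategy essentially verbatim: descend a constellation to a finite quotient, set $H=L(\Xi_m,1)$ and $K=L(\Th_m,1)$, approximate the element $w=a_mb_m^{-1}$ (the paper's $uv$) by products $h_nk_n\in HK$ built from paths at the higher levels, and then exclude a factorization $w=hk$. For that last step the paper is more direct than your backtracking analysis: from $w=hk$ one has $h^{-1}a_m=kb_m$ as elements of $F$, so their common reduced form labels \emph{one} path $1\to g_m$ in $\Ga(G_m)$, which lies in $\Xi_m$ (read as $\mathrm{red}(h^{-1}a_m)$) and in $\Th_m$ (read as $\mathrm{red}(kb_m)$), giving the contradiction immediately.
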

\begin{proof} Take a constellation $(\Xi,\ga,\Th)$ in $\Ga(\wh{F_{\fF}})$ and a finite quotient $G$ of $\wh{F_{\fF}}$ with morphism $\vp:\wh{F_{\fF}}\twoheadrightarrow G$ such that $(\Xi\vp,\ga\vp,\Th\vp)$ is a constellation in $\Ga(G)$. Set $G_0:=G$ and take an inverse system $G_0\twoheadleftarrow G_1\twoheadleftarrow G_2\twoheadleftarrow \cdots$ of $A$-generated groups with $\wh{F_{\fF}}=\vpl G_n$. Denote by $\vp_m$ the canonical morphism $\wh{F_{\fF}}\twoheadrightarrow G_m$. For every $m$, the triple $(\Xi\vp_m,\ga\vp_m,\Th\vp_m)$ is a constellation in $\Ga(G_m)$.

Choose two words $u,v\in F$ such that $u$ labels a path $1\to \ga\vp_0$ which runs entirely in $\Xi\vp_0$ and $v$ labels a path $\ga\vp_0\to 1$ which runs entirely in $\Th\vp_0$. Next, for each $m$ choose words $h_m, k_m\in F$ such that $h_m$ labels a path $\ga\vp_m\to 1$ in $\Ga(G_m)$ which runs entirely in $\Xi\vp_m$ and $k_m$ labels a path $1\to\ga\vp_m$ in $\Ga(G_m)$ which runs entirely in $\Th\vp_m$, Then, for every $m$,
$$[h_m]_{G_m}=[\ga\inv]_{G_m}\mbox{ and }[k_m]_{G_m}=[\ga]_{G_m}$$
and therefore $\lim h_m=\ga\inv$ and $\lim k_m=\ga$ in $\wh{F_{\fF}}$.

Notice that $h_m$ labels also a path $\ga\vp_0\to 1$ running in $\Xi\vp_0$ and $k_m$ labels a path $1\to\ga\vp_0$ running in $\Th\vp_0$.
Let  $H=L(\Xi\vp_0,1)$ and $K=L(\Th\vp_0,1)$, respectively. Both $H$ and $K$ are $\fF$-extendible. Since $uh_m$ labels a closed path at $1$ in $\Xi\vp_0$ and $k_mv$ labels a closed path at $1$ in $\Th\vp_0$ it follows that $uh_m\in H$ and $k_mv\in K$. Moreover,
$$\lim_{m\to \infty}uh_mk_mv=u\ga\inv\ga v=uv\in F.$$
We are left with showing that $uv\notin HK$. Suppose, by contrast, that there are $h\in H, k\in K$ such that $uv=hk$ (equality holding in $F$). Then $h\inv u=kv\inv$. In $G_0$ we have
$$[h\inv u]_{G_0}=\ga\vp_0=[kv\inv]_{G_0},$$
 and $h\inv u$ and therefore also its reduced form $\mathrm{red}(h\inv u)$ labels a path $1\to \ga\vp_0$ in $\Ga(G_0)$ running entirely in $\Xi\vp_0$; likewise, $kv\inv$ and therefore also $\mathrm{red}(kv\inv)$ labels a path $1\to\ga\vp_0$ in $\Ga(G_0)$ running entirely in $\Th\vp_0$. However, $\mathrm{red}(h\inv u)$ and $\mathrm{red}(kv\inv)$ are identical as words and so must label the same path $1\to\ga\vp_0$ in $\Ga(G_0)$. This contradicts the fact that there is no path $1\to \ga\vp_0$ in $\Ga(G_0)$ running entirely in $\Xi\vp_0\cap \Th\vp_0$.
\end{proof}

As a consequence we may summarize:

\begin{Thm}\label{arboreous} The free pro-$\fF$-group $\wh{F_{\fF}}$ has tree-like Cayley graph if and only if for any two  $\fF$-extendible subgroups $H$ and $K$ of $F$ the product $HK$ is pro-$\fF$-closed.
\end{Thm}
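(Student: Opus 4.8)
The plan is to assemble the theorem directly from the two propositions just established, since between them they already deliver both implications of the biconditional. The only genuine content lies in Propositions \ref{tlidc} and \ref{rz2}; the present theorem is their formal consequence, and the task is merely to package them correctly.

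First I would dispatch the ``only if'' direction. Assuming $\Ga(\wh{F_\fF})$ is tree-like, Proposition \ref{tlidc} asserts verbatim that the product $HK$ of any two $\fF$-extendible subgroups $H,K$ of $F$ is pro-$\fF$-closed. So this half requires no new argument at all; one simply cites \ref{tlidc}.

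For the ``if'' direction I would argue by contraposition. Suppose $\Ga(\wh{F_\fF})$ fails to be tree-like. Then, by the criterion noted just before Proposition \ref{rz2}, its Cayley graph admits a constellation, and Proposition \ref{rz2} explicitly manufactures from such a constellation a pair of $\fF$-extendible subgroups $H,K$ whose product $HK$ is \emph{not} pro-$\fF$-closed. This contradicts the hypothesis that every such product is closed, and the contrapositive yields that $\Ga(\wh{F_\fF})$ must be tree-like whenever all products $HK$ are pro-$\fF$-closed.

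Combining the two implications gives the equivalence. The ``main obstacle'' is thus not in this proof but has already been absorbed into Propositions \ref{tlidc} and \ref{rz2}: the real work is the geometric dichotomy between the tree-like property (phrased via geodesics and the intersection of connected subgraphs) and its failure (phrased via constellations), together with the translation of each side into a statement about closure of $HK$ in the pro-$\fF$-topology. Since those translations are precisely what the two propositions accomplish, the theorem follows at once.
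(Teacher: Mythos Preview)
Your proposal is correct and matches the paper's treatment exactly: the paper introduces the theorem with the phrase ``As a consequence we may summarize'' and gives no separate proof, treating it as the immediate combination of Propositions \ref{tlidc} and \ref{rz2}. There is nothing to add.
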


The last result motivates the following definition: a formation $\fF$ is \emph{arboreous} if the Cayley graph $\Ga(\wh{F_{\fF}})$ of every finitely generated free pro-$\fF$-group is tree-like; this is the straightforward analogue in the context of formations of a notion introduced for varieties in \cite{geometry}. Then $\fF$ is arboreous if and only if for every finitely generated free group $F$, the product $HK$ of any two  $\fF$-extendible subgroups $H$ and $K$ of $F$ is closed in $F$ with respect to the pro-$\fF$-topology.

It would be interesting if in Proposition \ref{rz2} and therefore in the `if'-part of Theorem \ref{arboreous} ``$\fF$-extendible'' may be replaced with ``pro-$\fF$-closed'' for the involved groups $H$ and $K$. If we allow formations $\fF$ for which $F$ is not residually $\fF$ then this is not true for trivial reasons. Indeed, in that case the Cayley graph $\Ga(\wh{F_\fF})$ is certainly not tree-like since it contains a finite circuit. Let $L_\fF$ be the intersection of all normal subgroups $N$ of $F$ for which $F/N\in\fF$. Then the pro-$\fF$-closure of $\{1\}$ is $L_\fF$ and every pro-$\fF$-closed subgroup $H$ of $F$ must contain $L_\fF$. In that case, the core graph of $H$ is complete, hence $H$ is --- if it is finitely generated --- of finite index and therefore clopen. The product $HK$ of two such groups is necessarily also clopen (being the union of finitely many cosets of $H$). In this case the adequate environment to study separability properties concerning products $H_1\cdots H_n$ of finitely generated subgroups is the group $F/L_\fF$ rather than $F$.

\begin{Problem}  Does there exist a non-arboreous formation (or variety) $\fF$ for which $F$ is residually $\fF$ and such that the product $HK$ is pro-$\fF$-closed for any two finitely generated pro-$\fF$-closed subgroups $H$ and $K$ of $F$?
\end{Problem}
Any such example must be a non-Hall formation. Another open problem occurring in this context is:
\begin{Problem}  Does there exist a formation (or variety) which is Hall but not arboreous?
\end{Problem}

\subsection{The Ribes-Zalesski{\u\i}-Theorem}\label{ribeszalesskii} Now we show that for an arboreous formation $\fF$ even a stronger separability conditon holds: the product $H_1\cdots H_n$ of an arbitrary number $n$ of $\fF$-extendible subgroups $H_1,\dots, H_n$ of $F$ is pro-$\fF$-closed.

We start with some preliminary statements concerning tree-like graphs.

\begin{Lemma}\label{disconnected} Let $\Gamma$ be a connected profinite graph and $\Delta$ and $\Xi$ be non-empty disjoint subgraphs of $\Gamma$. Then the graph $\Delta\cup \Xi$ is disconnected.
\end{Lemma}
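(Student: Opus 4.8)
The plan is to unwind the profinite meaning of the word \emph{disconnected}: by the conventions of Section~\ref{graphs}, a profinite graph is connected precisely when all of its finite continuous quotients are connected as abstract graphs, so to prove that $\Delta\cup\Xi$ is disconnected I would exhibit one finite continuous quotient of it that is disconnected as an abstract graph. Write $\Lambda:=\Delta\cup\Xi$. As a union of two closed subgraphs of the profinite graph $\Gamma$, $\Lambda$ is itself closed and hence a profinite graph; moreover $\Delta$ and $\Xi$ partition $\Lambda$ into two non-empty subgraphs, and since each is the complement of the other and both are closed, each of $\Delta,\Xi$ is clopen in $\Lambda$.

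Next I would pass to an inverse-limit presentation $\Lambda=\varprojlim_i\Lambda_i$ by finite graphs, with canonical surjections $\pi_i\colon\Lambda\twoheadrightarrow\Lambda_i$, and locate a level at which the partition is already visible. Since $V(\Delta)$ is clopen in the profinite space $V(\Lambda)=\varprojlim_iV(\Lambda_i)$, it is a cylinder: there are an index $i$ and a subset $S_i\subseteq V(\Lambda_i)$ with $V(\Delta)=\pi_i^{-1}(S_i)$, whence $V(\Xi)=\pi_i^{-1}\big(V(\Lambda_i)\setminus S_i\big)$ by surjectivity of $\pi_i$ on vertices. Both $S_i$ and its complement are non-empty because $\Delta$ and $\Xi$ are non-empty.

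The remaining step is to check that this $\Lambda_i$ is disconnected. Here I would use that $\Delta$ and $\Xi$ are vertex-disjoint subgraphs: every edge of $\Lambda$ lies entirely in $\Delta$ or entirely in $\Xi$, so its two endpoints lie entirely in $V(\Delta)$ or entirely in $V(\Xi)$. Applying $\pi_i$ and using surjectivity on edges, every edge of $\Lambda_i$ has both endpoints in $S_i$ or both in $V(\Lambda_i)\setminus S_i$; no edge joins the two blocks. Thus $\Lambda_i$ splits into the two non-empty vertex blocks $S_i$ and $V(\Lambda_i)\setminus S_i$ with no edge joining them, so it is disconnected, and this is the finite continuous quotient we wanted.

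I expect the only genuine subtlety to be keeping the \emph{profinite} notion of connectedness in focus. Abstract disconnectedness of $\Delta\cup\Xi$ is immediate from vertex-disjointness, but that is not yet the assertion; the real content is that the separation descends to a finite quotient. This is exactly where the hypothesis that $\Delta$ and $\Xi$ are subgraphs of a profinite graph --- hence compact and closed --- is indispensable: it is what lets the disjointness be resolved by a clopen partition and therefore detected at a finite level. Dropping closedness, the union of two disjoint non-closed pieces could perfectly well remain profinitely connected.
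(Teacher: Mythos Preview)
Your proof is correct and follows essentially the same strategy as the paper's: separate the disjoint closed vertex sets $V(\Delta)$ and $V(\Xi)$ at a finite level of the profinite structure and observe that the resulting finite quotient graph has no edge crossing between the two blocks, hence is disconnected. The only cosmetic difference is that you work intrinsically with an inverse-limit presentation of $\Lambda=\Delta\cup\Xi$ and the clopen partition it carries, whereas the paper separates $V(\Delta)$ and $V(\Xi)$ via a finite quotient $\Gamma'$ of the ambient graph $\Gamma$ and then looks at $(\Delta\cup\Xi)\varphi\subseteq\Gamma'$.
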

\begin{proof} The closed subsets $V(\De)$ and $V(\Xi)$ of $V(\Ga)$ are disjoint and therefore can be separated by open sets. Hence there exists a finite quotient $\Gamma'$ of $\Gamma$ with canonical morphism $\varphi:\Gamma\twoheadrightarrow \Gamma'$ such that $\Delta\varphi\cap\Xi\varphi=\emptyset$. Since $\Delta\varphi$ and $\Xi\varphi$ have no common vertex, every path from a vertex in $\Delta\varphi$ to a vertex in $\Xi\varphi$ must traverse an edge outside $\Delta\varphi\cup\Xi\varphi$. It follows that $(\Delta\cup\Xi)\varphi=\Delta\varphi\cup \Xi\varphi$ is not connected. Consequently, $\Delta\cup \Xi$ is not connected, as well.
\end{proof}
\begin{Thm}\label{circuitforgraphs} Let $\Gamma$ be a tree-like graph, $n\ge 3$ and $\gamma_0,\dots,\gamma_n\in V(\Gamma)$ be such that
\begin{equation}\label{cancellation}[\gamma_{k-2},\gamma_{k-1}]\cap[\gamma_{k-1},\gamma_k]\cap[\gamma_k,\gamma_{k+1}]=\emptyset
\mbox{ for all }k\in \{2,\dots,n-1\}.\end{equation}
Then $[\gamma_0,\gamma_1]\cap [\gamma_{n-1},\gamma_n]=\emptyset$.
\end{Thm}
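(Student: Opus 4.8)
The plan is to run a strong induction on $n$, reducing everything to two elementary facts about geodesics in a tree-like graph which follow quickly from the definition of $[\,\cdot\,,\cdot\,]$ and from Lemma~\ref{disconnected}.

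First I would record two preliminary observations. \emph{(Convexity.)} If $u,v$ are vertices of a geodesic $[a,b]$, then $[u,v]\subseteq[a,b]$: indeed $[a,b]$ is a connected subgraph containing $u$ and $v$, and $[u,v]$ is contained in every such subgraph. \emph{(Geodesic triangles meet.)} For any three vertices $a,b,c$ the intersection $[a,b]\cap[b,c]\cap[c,a]$ is non-empty. To see this, note $[b,c]\subseteq[a,b]\cup[a,c]$, since the right-hand side is connected and contains $b,c$; hence the two convex --- therefore connected and closed --- subgraphs $P=[a,b]\cap[b,c]$ and $Q=[a,c]\cap[b,c]$ cover $[b,c]$. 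Were they disjoint, Lemma~\ref{disconnected} would force the connected graph $[b,c]=P\cup Q$ to be disconnected; thus $P\cap Q=[a,b]\cap[b,c]\cap[c,a]\neq\emptyset$. I shall call any vertex of this set a \emph{median} of $a,b,c$.

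For the induction, suppose toward a contradiction that some vertex $x$ lies in $[\gamma_0,\gamma_1]\cap[\gamma_{n-1},\gamma_n]$. I would pick a median $u$ of the three vertices $x,\gamma_1,\gamma_{n-1}$, so that $u\in[x,\gamma_1]$, $u\in[x,\gamma_{n-1}]$ and $u\in[\gamma_1,\gamma_{n-1}]$. Since $x,\gamma_1\in[\gamma_0,\gamma_1]$ and $x,\gamma_{n-1}\in[\gamma_{n-1},\gamma_n]$, convexity gives $u\in[\gamma_0,\gamma_1]$ and $u\in[\gamma_{n-1},\gamma_n]$. On the other hand $[\gamma_1,\gamma_{n-1}]$ is contained in the connected graph $[\gamma_1,\gamma_2]\cup\cdots\cup[\gamma_{n-2},\gamma_{n-1}]$, so $u$ is a vertex of $[\gamma_{i_0-1},\gamma_{i_0}]$ for some $i_0\in\{2,\dots,n-1\}$.

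It then remains to turn the location of $u$ into a contradiction. If $n=3$, then necessarily $i_0=2$ and $u\in[\gamma_0,\gamma_1]\cap[\gamma_1,\gamma_2]\cap[\gamma_2,\gamma_3]$, contradicting hypothesis (\ref{cancellation}) at $k=2$; this settles the base case. If $n\geq4$, I would invoke the induction hypothesis on proper subsequences: applying the theorem to $\gamma_0,\dots,\gamma_{i_0}$ (whose cancellation conditions are inherited from those of the full sequence) yields $[\gamma_0,\gamma_1]\cap[\gamma_{i_0-1},\gamma_{i_0}]=\emptyset$ whenever $i_0\geq3$, while applying it to $\gamma_1,\dots,\gamma_n$ yields $[\gamma_1,\gamma_2]\cap[\gamma_{n-1},\gamma_n]=\emptyset$ in the remaining case $i_0=2$. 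Either way the membership of $u$ just established is impossible, which completes the induction. I expect the only genuine subtlety to be the median lemma --- the assertion that the three sides of a geodesic triangle share a vertex --- since that is exactly where the tree-like hypothesis (through Lemma~\ref{disconnected}) does the work; once it is in place, the bookkeeping of the induction, in particular the careful inheritance of the conditions (\ref{cancellation}) by the subsequences $\gamma_0,\dots,\gamma_{i_0}$ and $\gamma_1,\dots,\gamma_n$, is routine.
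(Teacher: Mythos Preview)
Your argument is correct, but it takes a different route from the paper's. The paper argues by a minimal counterexample: with $\Delta=[\gamma_0,\gamma_1]\cap[\gamma_{n-1},\gamma_n]$ assumed non-empty and $\Theta=[\gamma_{n-2},\gamma_{n-1}]\cap[\gamma_{n-1},\gamma_n]$, minimality of $n$ forces
\[
\bigl([\gamma_0,\gamma_1]\cup\cdots\cup[\gamma_{n-2},\gamma_{n-1}]\bigr)\cap[\gamma_{n-1},\gamma_n]=\Delta\cup\Theta
\]
with $\Delta\cap\Theta=\emptyset$; Lemma~\ref{disconnected} then makes this intersection disconnected, contradicting the fact (quoted from \cite[Prop.~2.2]{geometry}) that in a tree-like graph the intersection of two connected subgraphs is connected. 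Your proof instead isolates a \emph{median} point of the triangle $x,\gamma_1,\gamma_{n-1}$ and pushes the contradiction down to a shorter subsequence. Both approaches rest on the same two ingredients --- convexity of geodesics and Lemma~\ref{disconnected} --- but the paper applies them globally to the whole intersection, while you localise to a single vertex. The paper's version is a bit leaner (no auxiliary median lemma, no case split on $i_0$), whereas your median statement is a pleasant stand-alone fact about tree-like graphs that could be reused; in particular, you never need to invoke the cited result that intersections of connected subgraphs are connected, since your argument only ever intersects geodesics and handles that case directly via convexity. One cosmetic remark: in your median lemma you do not actually need $P$ and $Q$ to be connected --- Lemma~\ref{disconnected} only requires them to be non-empty, disjoint, and closed --- so the phrase ``convex --- therefore connected'' can be dropped.
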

\begin{proof} First recall that in $\Gamma$, the intersection of any two connected subgraphs $\Lambda$ and $\Xi$ is connected \cite[Prop. 2.2]{geometry}: indeed, for any two vertices $u,v$ in $\Lambda\cap\Xi$, the geodesic $[u,v]$ is also contained in $\Lambda\cap \Xi$.

Suppose that the claim of the Theorem is not true and let $n\ge 3$ be minimal such that a counterexample  $\gamma_0,\dots, \gamma_n$ exists.
Set
$$\Delta=[\gamma_0,\gamma_1]\cap[\gamma_{n-1},\gamma_n],\quad\Theta=[\gamma_{n-2}\cap\gamma_{n-1}]\cap[\gamma_{n-1},\gamma_n]$$ and
$$\Lambda=[\gamma_0,\gamma_1]\cup[\gamma_1,\gamma_2]\cup\dots\cup[\gamma_{n-2},\gamma_{n-1}].$$
 Since $\gamma_0,\gamma_1,\dots,\gamma_n$ is a counterexample, $\Delta\ne\emptyset$.
Since $n$ is minimal for a counterexample,
$$\Lambda\cap[\gamma_{n-1},\gamma_n]=([\gamma_0,\gamma_1]\cup[\gamma_{n-2},\gamma_{n-1}])\cap[\gamma_{n-1},\gamma_n]=\Delta\cup \Theta.$$
Moreover, $\Delta\cap \Theta=\emptyset$. In case $n=3$ this is equivalent to
$$[\gamma_0,\gamma_1]\cap[\gamma_1,\gamma_2]\cap[\gamma_2,\gamma_3]=\emptyset;$$
for $n>3$ this follows from $[\gamma_0,\gamma_1]\cap[\gamma_{n-2},\gamma_{n-1}]=\emptyset$ (since $n$ is minimal for a counterexample). By Lemma \ref{disconnected}, the graph $$\Lambda\cap [\gamma_{n-1},\gamma_n]=\Delta\cup\Theta$$ is disconnected which contradicts the fact that  the intersection of two connected subgraphs of $\Gamma$ is connected.
\end{proof}

Note that  Theorem \ref{circuitforgraphs} immediately implies that in case (\ref{cancellation}) holds for $\gamma_0,\dots,\gamma_n$ then
$$[\gamma_{i-1},\gamma_i]\cap[\gamma_k,\gamma_{k+1}]=\emptyset$$
for all $1\le i < k\le n-1$. Theorem \ref{circuitforgraphs} is a generalization of Theorem 3.7 in \cite{geometry} but the proof presented here is much simpler than the one in \cite{geometry}.

 Now let $\mathcal G$ be an $A$-generated profinite group with tree-like Cayley graph $\Ga(\mathcal G)$; note that the abstract subgroup of $\mathcal G$ generated by $A$ is $F$.
\begin{Lemma}\label{empty} Let $n\ge 2$, $\ga_0,\dots,\ga_n\in \mathcal{G}$ and suppose that in case $n\ge 3$  for all $i\in\{2,\dots, n-1\}$ condition (\ref{cancellation}) holds. Let $w\in F$, $u$
be a prefix of $w$, that is $w=uv$ for some $v$ (equality of words) and suppose that $u\in [\ga_0,\ga_1]$ and $w\in [\ga_{n-1},\ga_n]$. Then $v$ admits a factorisation $v=v_1\cdots v_n$ such that, for $z_i=uv_1\cdots v_i$ (for $0\le i\le n$) then
$$z_i\in [\ga_{i-1},\ga_i]\cap[\ga_i,\ga_{i+1}]\mbox{ for } 1\le i \le n-1$$
and  $[z_{i-1},z_i]\subseteq[\ga_{i-1},\ga_i]$ for $1\le i\le n$.
\end{Lemma}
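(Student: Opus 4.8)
The plan is to induct on $n$, the engine being the location of a single ``first breakpoint'' $z_1$ which I then peel off in order to recurse. Throughout I write $\La_i:=[\ga_{i-1},\ga_i]$ for $1\le i\le n$ and freely use that in a tree-like graph the intersection of two connected subgraphs is connected (as recalled in the proof of Theorem \ref{circuitforgraphs} from \cite[Prop.~2.2]{geometry}) and that $[\al,\beta]$ is the smallest connected subgraph containing $\al,\beta$, hence lies inside every connected subgraph containing them. The one preliminary observation I would isolate first is the bridge between the geometry and the word $v$: the subgraph spanned by the path labelled $v$ issuing from $u$ is connected and contains both $u$ and $w=uv$, so it contains the geodesic $[u,w]$; consequently every vertex of $[u,w]$ is visited by that path and thus equals $uv_1$ for some prefix $v_1$ of $v$. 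This is what will let me turn geometric breakpoints into an honest factorisation of the word $v$.

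First I would locate $z_1$. Since $\La_1\cup\cdots\cup\La_n$ is connected and contains $u$ and $w$, it contains $[u,w]$, so $[u,w]=([u,w]\cap\La_1)\cup([u,w]\cap(\La_2\cup\cdots\cup\La_n))$ is a union of two connected subgraphs, the first containing $u$ and the second containing $w$. If these were disjoint, Lemma \ref{disconnected} would force $[u,w]$ to be disconnected, which is impossible; hence they meet and $[u,w]\cap\La_1\cap(\La_2\cup\cdots\cup\La_n)\ne\emptyset$. For $n\ge3$ the remark following Theorem \ref{circuitforgraphs}, applicable exactly because (\ref{cancellation}) is assumed, gives $\La_1\cap\La_j=\emptyset$ for $3\le j\le n$, whence $\La_1\cap(\La_2\cup\cdots\cup\La_n)=\La_1\cap\La_2$ (the identity being trivial when $n=2$). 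So I may pick a vertex $z_1\in[u,w]\cap\La_1\cap\La_2$. Then $[z_0,z_1]=[u,z_1]\subseteq\La_1$ because $u,z_1\in\La_1$, and $z_1\in\La_1\cap\La_2$ as required; by the preliminary observation $z_1=uv_1$ for a prefix $v_1$ of $v$, and I write $w=z_1v''$ correspondingly.

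If $n=2$ this already finishes the proof: $[z_1,z_2]=[z_1,w]\subseteq\La_2$ since $z_1,w\in\La_2$, and $v=v_1v''$ is the required factorisation. For $n\ge3$ the path labelled $v''$ from $z_1$ to $w$ is the tail of the path labelled $v$, we have $z_1\in\La_2=[\ga_1,\ga_2]$ and $w\in[\ga_{n-1},\ga_n]$, and condition (\ref{cancellation}) for the truncated sequence $\ga_1,\dots,\ga_n$ is a sub-collection of the hypothesis; so the induction hypothesis applied to $z_1,w$ and $\ga_1,\dots,\ga_n$ factorises $v''=v_2\cdots v_n$ with breakpoints $z_i=z_1v_2\cdots v_i=uv_1\cdots v_i$ satisfying all required inclusions for $2\le i\le n$. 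Concatenating gives $v=v_1\cdots v_n$ and, together with the two $i=1$ conditions secured above, completes the induction. I expect the main obstacle to be precisely this interface between combinatorics and geometry: the geodesic breakpoint handed to me by the disconnection argument must be realisable as a cut point of the prescribed word $v$, which is exactly what the preliminary observation guarantees; the remaining points — disjointness of non-consecutive geodesics, inheritance of (\ref{cancellation}) under truncation, and the consistent composition of the word factorisations along the recursion — are routine bookkeeping.
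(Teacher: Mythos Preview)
Your proof is correct and follows essentially the same route as the paper's: induction on $n$, locating a breakpoint on $[u,w]$ lying in $[\ga_0,\ga_1]\cap[\ga_1,\ga_2]$ via Theorem~\ref{circuitforgraphs}, then recursing on the truncated sequence $\ga_1,\dots,\ga_n$. Your write-up is in fact slightly more explicit than the paper's in two spots --- you invoke Lemma~\ref{disconnected} to justify that the two connected pieces of $[u,w]$ must meet (the paper simply says the geodesic ``must go through'' such a vertex), and your preliminary observation spells out why the geometric breakpoint is realised by an actual prefix of $v$ (the paper phrases this informally as ``the label of the corresponding geodesic path $u\to x$ which is a prefix of $v$'').
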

\begin{proof} The situation described in the lemma is depicted in Figure \ref{case2}.
\begin{figure}[ht]
\begin{tikzpicture}[scale=0.7]
\draw[thick] (0,1) to [out=90,in=180] (2,2) to [out=0,in=90] (4,1) to [out=-90,in=0]  (2,0) to [out=180,in=-90] (0,1);
\draw[thick] (2.5,1) to [out=90,in=180] (4.5,2) to [out=0,in=90] (6.5,1) to [out=-90,in=0]  (4.5,0) to [out=180,in=-90] (2.5,1);
\draw[thick] (5,1) to [out=90,in=180] (7,2) to [out=0,in=90] (9,1) to [out=-90,in=0]  (7,0) to [out=180,in=-90] (5,1);
\draw[dotted] (7.5,1) to [out=90,in=180] (9.5,2) to [out=0,in=90] (11.5,1) to [out=-90,in=0]  (9.5,0) to [out=180,in=-90] (7.5,1);
\draw[thick] (10,1) to [out=90,in=180] (12,2) to [out=0,in=90] (14,1) to [out=-90,in=0]  (12,0) to [out=180,in=-90] (10,1);
\draw[thick] (12.5,1) to [out=90,in=180] (14.5,2) to [out=0,in=90] (16.5,1) to [out=-90,in=0] (14.5,0) to [out=180,in=-90] (12.5,1);
\node at (1,1) {$\bullet$};
\node at (2,0.7) {$\bullet$};
\node[below] at (2,0.7) {$u$};
\node at (3.25,1.15) {$\bullet$};
\node at (3.25,0.75) {$\bullet$};
\node at (5.75,1.15) {$\bullet$};
\node at (5.75,0.75) {$\bullet$};
\node[below] at (5.75,0.75) {$z_2$};
\node at (8.25,1.15) {$\bullet$};
\node at (8.25,0.75) {$\bullet$};
\node[below] at (8.25,0.75) {$z_3$};
\node at (10.75,1) {$\bullet$};
\node at (10.75,0.3) {$\bullet$};
\node[below] at (10.75,0.2) {$z_{n-2}$};
\node at (13.25,1) {$\bullet$};
\node at (13.25,0.3) {$\bullet$};
\node[below] at (13.25,0.2) {$z_{n-1}$};
\node at (15.75,1) {$\bullet$};
\node at (14.75,0.7) {$\bullet$};
\node[below] at (14.75,0.7) {$w$};
\draw (2,0.7) -- (3.25,0.75) -- (5.75,0.75)--(8.25,0.75);
\draw[densely dotted] (8.25,0.75) -- (10.75,0.3);
\draw (10.75,0.3) -- (13.25,0.3) -- (14.75,0.7);
\node[above] at (1,1) {$\gamma_0$};
\node[above] at (3.25,1.15) {$\gamma_1$};
\node[above] at (5.75,1.15) {$\gamma_2$};
\node[above] at (8.25,1.15) {$\gamma_3$};
\node[above] at (10.75,1) {$\gamma_{n-2}$};
\node[above] at (13.25,1) {$\gamma_{n-1}$};
\node[above] at (15.75,1) {$\gamma_n$};
\node[below] at (3.25,0.75) {$z_1$};
\end{tikzpicture}
\caption{The graph $[\ga_0,\ga_1]\cup[\ga_1,\ga_2]\cup\cdots\cup[\ga_{n-1},\ga_n]$.}\label{case2}
\end{figure}
The proof is by induction on $n$. For $n=2$ we have $u \in [\ga_0,\ga_1]$, $uv=w\in [\ga_1,\ga_2]$. Since $[\ga_0,\ga_1]\cup[\ga_1,\ga_2]$ is connected, $[u,w]\subseteq [\ga_0,\ga_1]\cup[\ga_1,\ga_2]$ (since $\Gamma(\mathcal G)$ is a Hall graph). The geodesic $[u,w]$ has vertex $u$  in $[\ga_0,\ga_1]$ and $w$ in $[\ga_1,\ga_2]$ hence must go through a vertex in $[\ga_0,\ga_1]\cap[\ga_1,\ga_2]$. Let $v=v_1v_2$ be the corresponding factorization of the label $v$ of the path $u\to w$ then $uv_1\in [\ga_0,\ga_1]\cap[\ga_1,\ga_2]$. Since $u,uv_1\in [\ga_0,\ga_1]$ we also have $[u,uv_1]\subseteq [\ga_0,\ga_1]$, likewise $uv_1,uv_1v_2=w\in [\ga_1,\ga_2]$ whence $[uv_1,w]=[uv_1,uv_1v_2]\subseteq [\ga_1,\ga_2]$, as required.

Now suppose that $n\ge 3$ and the claim be true for $n-1$.
By assumption,
$$u,w\in [\ga_0,\ga_1]\cup[\ga_1,\ga_2]\cup\cdots\cup[\ga_{n-1},\ga_n]$$
whence $$[u,w]\subseteq [\ga_0,\ga_1]\cup[\ga_1,\ga_2]\cup\cdots\cup[\ga_{n-1},\ga_n]$$ since the latter is connected. Let
$$\De=[\ga_1,\ga_2]\cup\cdots\cup[\ga_{n-1},\ga_n].$$
Since $[u,w]\subseteq [\ga_0,\ga_1]\cup\De$, $u\in[\ga_0,\ga_1]$ and $w\in \De$ the geodesic $[u,w]$  must go through a vertex $x$ in $[\ga_0,\ga_1]\cap\De$. By Theorem \ref{circuitforgraphs} we have $[\ga_0,\ga_1]\cap[\ga_i,\ga_{i+1}]=\emptyset$ for all $i> 1$. Hence $x\in [\ga_0,\ga_1]\cap[\ga_1,\ga_2]$; then $u,x\in [\ga_0,\ga_1]$ implies $[u,x]\subseteq [\ga_0,\ga_1]$ (again since  $\Ga(\mathcal{G})$ is a Hall graph). Let $v_1$ be the label of the corresponding geodesic path $u\to x$ which is a prefix of $v$;   then, for $v=v_1z$, $z$ labels the geodesic path $x\to w$ which runs entirely in $\De$ since $x,w\in \De$ implies $[x,w]\subseteq \De$. By the induction hypothesis, $z$ admits a factorization $z=v_2\cdots v_n$ such that for $z_0:=u$, $z_1:=x=uv_1$ and $z_i=xv_2\dots v_i$ (for $2\le i\le n$) then $z_j\in [\ga_{j-1},\ga_j]\cap[\ga_j,\ga_{j+1}]$ for $1\le j\le n-1$ and $[z_{i-1},z_i]\subseteq[\ga_{i-1},\ga_i]$ for $i=1,\dots,n$.
\end{proof}

We have thus all prerequisites we need for a proof of the Ribes-Zalesski{\u\i}-Theorem for the pro-$\mathfrak{F}$-topology of a free group where $\mathfrak{F}$ is  arboreous.
\begin{Thm} Let $\fF$ be an arboreous formation, $n\ge 2$ and $H_1,\dots,H_n\le F$ be finitely generated $\fF$-extendible groups. Then the product $H_1\cdots H_n$ is pro-$\fF$-closed in $F$.
\end{Thm}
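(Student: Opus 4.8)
The plan is to induct on $n$, the base case $n=2$ being Proposition~\ref{tlidc}. First I would reformulate the problem. Since $\fF$ is arboreous, $\Ga(\wh{F_\fF})$ is tree-like, hence Hall, so by Theorem~\ref{Hallformations} each $\fF$-extendible $H_i$ is itself pro-$\fF$-closed. Take a sequence $w_m=h_{1,m}\cdots h_{n,m}\to w\in F$ with $h_{i,m}\in H_i$; passing to a subsequence we may assume $h_{i,m}\to\eta_i$ in $\wh{F_\fF}$, so $\eta_i\in\ol{H_i}$ and $w=\eta_1\cdots\eta_n$. Put $\gamma_i=\eta_1\cdots\eta_i$, so $\gamma_0=1$ and $\gamma_n=w$. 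Because $\eta_i\in\ol{H_i}=\ol{L(\cH_i,b_{\cH_i})}$, both $\gamma_{i-1}$ and $\gamma_i$ lie in $\gamma_{i-1}\cH_i^{\wh{F_\fF}}$ and are sent to $b_{\cH_i}$ by the canonical morphism $\gamma_{i-1}\cH_i^{\wh{F_\fF}}\twoheadrightarrow\cH_i$; tree-likeness then gives $[\gamma_{i-1},\gamma_i]\subseteq\gamma_{i-1}\cH_i^{\wh{F_\fF}}$.

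The heart of the argument is the case where condition (\ref{cancellation}) holds for $\gamma_0,\dots,\gamma_n$. Here I would apply Lemma~\ref{empty} with $u$ the empty prefix (so $w=v$), producing a factorisation $w=v_1\cdots v_n$ and break points $z_i=v_1\cdots v_i$ on the geodesic $[1,w]$ with $z_i\in[\gamma_{i-1},\gamma_i]\cap[\gamma_i,\gamma_{i+1}]$ and $[z_{i-1},z_i]\subseteq[\gamma_{i-1},\gamma_i]$. The key observation is that $[z_i,\gamma_i]\subseteq[\gamma_{i-1},\gamma_i]\cap[\gamma_i,\gamma_{i+1}]\subseteq\gamma_{i-1}\cH_i^{\wh{F_\fF}}\cap\gamma_i\cH_{i+1}^{\wh{F_\fF}}$. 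Now pass to a finite quotient $\vp:\wh{F_\fF}\twoheadrightarrow G$ with $G\twoheadrightarrow T_{\ol{\cH_i}}$ for every $i$ and with $\vp$ injective on the \emph{finite} geodesic $[1,w]$. In $\Ga(G)$ choose, for each $i$, a path $z_i\vp\to\gamma_i\vp$ inside $\gamma_{i-1}\vp\,\cH_i^G\cap\gamma_i\vp\,\cH_{i+1}^G$ with (finite) label $b_i$, taking $b_0,b_n$ empty. Setting $f_i=b_{i-1}\inv v_i b_i$, each $f_i$ labels the path $\gamma_{i-1}\vp\to z_{i-1}\vp\to z_i\vp\to\gamma_i\vp$, which runs entirely in $\gamma_{i-1}\vp\,\cH_i^G$ and has both endpoints over $b_{\cH_i}$; hence $f_i\in H_i$. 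Since the detour label $b_i$ for $f_i$ and $b_i\inv$ for $f_{i+1}$ cancel, $f_1\cdots f_n=v_1\cdots v_n=w$, giving $w\in H_1\cdots H_n$.

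It then remains to reduce to the case that (\ref{cancellation}) holds, and this I would fold into the induction. If (\ref{cancellation}) fails at some $k\in\{2,\dots,n-1\}$, there is a vertex $\delta\in[\gamma_{k-2},\gamma_{k-1}]\cap[\gamma_{k-1},\gamma_k]\cap[\gamma_k,\gamma_{k+1}]$, so the middle geodesic backtracks into both neighbours ($[\gamma_{k-1},\delta]\subseteq[\gamma_{k-2},\gamma_{k-1}]$ and $[\delta,\gamma_k]\subseteq[\gamma_k,\gamma_{k+1}]$); along $[1,w]$ the $k$-th core graph then contributes nothing. The goal is to bypass $H_k$ and invoke the inductive hypothesis for the $n-1$ subgroups $H_1,\dots,H_{k-1},H_{k+1},\dots,H_n$, which suffices because $1\in H_k$ yields $H_1\cdots\widehat{H_k}\cdots H_n\subseteq H_1\cdots H_n$.

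I expect this reduction to be the main obstacle. The natural attempt factors $\eta_{k-1}\eta_k\eta_{k+1}=(\gamma_{k-2}\inv\delta)(\delta\inv\gamma_{k+1})$, but $\gamma_{k-2}\inv\delta$ and $\delta\inv\gamma_{k+1}$ lie in $\cH_{k-1}^{\wh{F_\fF}}$ and $\cH_{k+1}^{\wh{F_\fF}}$ only as vertices; they belong to the \emph{closed subgroups} $\ol{H_{k-1}},\ol{H_{k+1}}$ precisely when $\delta$ is sent to the base points $b_{\cH_{k-1}},b_{\cH_{k+1}}$, which an arbitrary point of the triple intersection need not satisfy. Thus the naive ``drop $H_k$'' does not work directly, and the real task is to choose the factorisation $w=\eta_1\cdots\eta_n$ so that no such backtracking occurs --- a \emph{geodesic} factorisation. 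The cleanest route I would pursue is to select the $\gamma_i$ greedily along $[1,w]$, taking $\gamma_i$ to be the point of $[\gamma_{i-1},w]\cap\gamma_{i-1}\ol{H_i}$ farthest from $\gamma_{i-1}$, and to prove that this process still reaches $w$ in $n$ steps and yields a chain satisfying (\ref{cancellation}); once that is secured, the second paragraph completes the proof.
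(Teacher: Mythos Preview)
Your treatment of the case where condition~(\ref{cancellation}) holds is correct and essentially identical to the paper's: the application of Lemma~\ref{empty}, the passage to a finite quotient $G$ with the three listed properties, and the telescoping product $f_1\cdots f_n=w$ are exactly what the paper does.

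The gap is in the other case. You correctly diagnose that the naive ``drop $H_k$'' fails because $\gamma_{k-2}\inv\delta$ need only lie in $\cH_{k-1}^{\wh{F_\fF}}$, not in $\ol{H_{k-1}}$. But your proposed remedy --- greedily choosing $\gamma_i$ on $[1,w]$ as the farthest point of $[\gamma_{i-1},w]\cap\gamma_{i-1}\ol{H_i}$ --- does not work as stated, and this is visible already for $n=2$. The point $v$ produced in the proof of Proposition~\ref{tlidc} lies in $\cH^{\wh{F_\fF}}\cap\eta\cK^{\wh{F_\fF}}$, not in $\ol H\cap\eta\ol K$; there is no reason why $[1,w]$ should meet the \emph{coset} $\ol{H_1}$ anywhere except at $1$, so your greedy process can stall at $\gamma_1'=1$ while $w\notin\ol{H_2}\cdots\ol{H_n}$. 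Replacing $\ol{H_i}$ by $\cH_i^{\wh{F_\fF}}$ in the greedy step would repair the reachability issue but destroys the coset structure you need afterwards, so you are back to the very obstacle you identified.

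The paper handles this case by a different mechanism. It first observes, via conjugation and translation, that the induction hypothesis gives closedness not just of $H_1\cdots H_{i-1}H_{i+1}\cdots H_n$ but of every translate $H_1\cdots H_{i-1}\,ef\,H_{i+1}\cdots H_n$ for fixed $ef\in F$. One then picks $\xi$ in the triple intersection and, in each finite quotient $G_m$, words $e_m,f_m$ labelling paths $\ga_{i-1}\vp_m\to\xi\vp_m\to\ga_i\vp_m$ inside $[\ga_{i-2},\ga_{i-1}]\vp_m\cap[\ga_{i-1},\ga_i]\vp_m$ and $[\ga_{i-1},\ga_i]\vp_m\cap[\ga_i,\ga_{i+1}]\vp_m$ respectively. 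Projecting to the core graphs $\cH_{i-1}$ and $\cH_{i+1}$ (where $\xi$ has a \emph{fixed} image independent of $m$) yields fixed words $e,f$ with $e_me\inv\in H_{i-1}$ and $f\inv f_m\in H_{i+1}$ for all $m$. Since $e_mf_m\to\eta_i$, one gets $w\in H_1\cdots H_{i-1}\,ef\,H_{i+1}\cdots H_n$ by the strengthened induction hypothesis. Finally $ef=(ee_0\inv)(e_0f_0)(f_0\inv f)$ with $e_0f_0\in H_i$ (it labels a closed path at $b_i$ in $\cH_i$) places $w$ in $H_1\cdots H_n$. The key idea you are missing is to exploit the induction hypothesis in this translated form rather than to try to re-engineer the chain $\gamma_0,\dots,\gamma_n$.
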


\begin{Rmk} Since an arboreous formation is \emph{a fortiori} Hall, being pro-$\fF$-closed and being $\fF$-extendible are equivalent for a finitely generated subgroup of $F$. Hence the formulation above is equivalent to the more familiar one: the product $H_1\cdots H_n$ is pro-$\fF$-closed for pro-$\fF$-closed subgroups $H_1,\dots, H_n$  of $F$.
\end{Rmk}

\begin{proof} We proceed by induction on $n$ with induction base $n=2$ being true by Proposition \ref{tlidc}. Let $n>2$ and suppose that the claim be true for $n'<n$. So let $H_1,\dots, H_n\le F$ be  $\fF$-extendible. By the induction hypothesis, for each $i\in \{1,\dots, n\}$ and $w\in F$ the set $H_1\cdots H_{i-1}wH_{i+1}\cdots H_n$ is pro-$\fF$-closed. Indeed, as mentioned above, being $\fF$-extendible is equivalent to being pro-$\fF$-closed which is a purely topological property and hence preserved by homeomorphisms of $F$. From the fact that the conjugation $x\mapsto wxw\inv$ is a homeomorphism it follows that each group ${}^wH_k=wH_kw\inv$ is $\fF$-extendible and hence, by induction hypothesis the set
$$H_1\cdots H_{i-1}w(H_{i+1}\cdots H_n)w\inv = H_1\cdots H_{i-1}{}^wH_{i+1}\cdots {}^wH_n$$
is pro-$\fF$-closed. Since right translation $x\mapsto xw$ is also a homeomorphism the claim follows.

Now take a sequence $(w_k)_{k\in \mathbb{N}}$ with $w_k\in H_1\cdots H_n$ such that $\lim w_k=w\in F$. We need to  show that $w\in H_1\cdots H_n$. Each $w_k$ can be written as
$$w_k=h_{1k}h_{2k}\cdots h_{nk}$$
with $h_{ik}\in H_i$ for all $i$ and $k$ and equality holds in $F$ (the product on the right hand side need not be reduced). We may assume (by going to a subsequence) that each of the sequences $(h_{ik})_{k\in \mathbb{N}}$ converges in $\wh{F_\fF}$. So, let $\eta_i=\lim_{k\to \infty}h_{ik}$ for $i=1,\dots,n$. In addition, we may assume that for all $i<j$, $\eta_i\cdots \eta_{j-1}\ne 1$ since otherwise we could replace each $w_k$ with
$$w'_k:=h_{1k}\cdots h_{i-1,k}h_{jk}\cdots h_{nk}.$$
Then $w=\lim w_k=\lim w'_k$ and
$$\lim w'_k\in H_1\cdots H_{i-1}H_j\cdots H_n\subseteq H_1\cdots H_n$$
by the induction hypothesis. Setting $\ga_0:=1$ and $\ga_i=\eta_1\cdots \eta_i$ for $i=1,\dots, n$ then we have that the elements $\ga_i$ are pairwise distinct. We distinguish two cases
\begin{enumerate}
\item there exists $i\in \{2,\dots,n-1\}$ such that
$$[\ga_{i-2},\ga_{i-1}]\cap[\ga_{i-1},\ga_i]\cap [\ga_i,\ga_{i+1}]\ne \emptyset,$$
\item for all $i\in \{2,\dots,n-1\}$,
$$[\ga_{i-2},\ga_{i-1}]\cap[\ga_{i-1},\ga_i]\cap [\ga_i,\ga_{i+1}]= \emptyset.$$
\end{enumerate}

Denote by $\cH_1,\dots,\cH_n$  the core-graphs of $H_1,\dots,H_n$ with base points $b_1,\dots, b_n$, completions $\ol{\cH_1},\dots,\ol{\cH_n}$ and transition groups $T_1,\dots, T_n\in \fF$, respectively. In case (1), let $i\in \{2,\dots,n-1\}$ be such that there exists
$$\xi\in [\ga_{i-2},\ga_{i-1}]\cap[\ga_{i-1},\ga_i]\cap[\ga_i,\ga_{i+1}].$$
Take an $A$-generated group $G_0\in \fF$ such that $G_0\twoheadrightarrow T_l$ for all $l$ and such that all elements $[\ga_l]_{G_0}$ are pairwise distinct. Choose an inverse system $(G_m)$ of $A$-generated groups in $\fF$ such that
$G_0\twoheadleftarrow G_1\twoheadleftarrow G_2\twoheadleftarrow\cdots$ and $\vpl G_m=\wh{F_{\fF}}$ and denote the canonical morphism $\wh{F_{\fF}}\twoheadrightarrow G_m$ by $\vp_m$. We may assume, by considering an appropriate subsequence $(w_{k_m})$ of $(w_k)$, that
$$[h_{lm}]_{G_m}=[\eta_l]_{G_m}$$ for all $l=1,\dots, n$ and all $m\in \mathbb{N}$.

For $\xi\in [\ga_{i-2},\ga_{i-1}]\cap[\ga_{i-1},\ga_i]\cap[\ga_i,\ga_{i+1}]$ we note that
$$[\ga_{i-1},\xi]\subseteq [\ga_{i-2},\ga_{i-1}]\cap[\ga_{i-1},\ga_i]\mbox {
and }[\xi,\ga_i]\subseteq [\ga_{i-1},\ga_i]\cap[\ga_i,\ga_{i+1}].$$
Choose $m\in\mathbb{N}$ and consider the image of
$$[\ga_{i-2},\ga_{i-1}]\cup[\ga_{i-1},\ga_i]\cup [\ga_i,\ga_{i+1}]$$
under $\vp_m$ (that is, within the graph $\Ga(G_m)$) and set $g_k:=\ga_k\vp_m$ for $k=i-2,i-1,i,i+1$ and $x=\xi\vp_m$. The words $h_{i-1,m}, h_{im}, h_{i+1,m}$ label paths $g_{i-2}\to g_{i-1}$, $g_{i-1}\to g_i$ and $g_i\to g_{i+1}$,  all in $\Ga(G_m)$, but not necessarily inside $([\ga_{i-2},\ga_{i-1}]\cup[\ga_{i-1},\ga_i]\cup [\ga_i,\ga_{i+1}])\vp_m$. We choose words $e_m$, $f_m$ such that $e_m$ labels a path $p_m:g_{i-1}\to x$ which runs inside $[\ga_{i-2},\ga_{i-1}]\vp_m\cap [\ga_{i-1},\ga_i]\vp_m$, and $f_m$ labels a path $q_m:x\to g_i$ which runs inside $[\ga_{i-1},\ga_{i}]\vp_m\cap [\ga_{i},\ga_{i+1}]\vp_m$.
The situation is depicted in Figure \ref{case1Gm}.
\begin{figure}[ht]
\begin{tikzpicture}[xscale=0.75]
\draw[thick] (0,0) to [out=90,in=180] (4.5,1) to [out=0,in=90] (8.8,0) to [out=-90,in=0]  (4.5,-1) to [out=180,in=-90] (0,0);
\draw[thick] (7.3,0) to [out=90,in=180] (11.5,1) to [out=0,in=90] (16,0) to [out=-90,in=0]  (11.5,-1) to [out=180,in=-90] (7.3,0);
\node at (1,0) {$\bullet$};
\node at (6,0) {$\bullet$};
\node at (10,0) {$\bullet$};
\node at (15,0) {$\bullet$};
\node at (8,0) {$\bullet$};
\node[above] at (1,0) {$g_{i-2}$};
\node[above right] at (6,-0.1) {$g_{i-1}$};
\node[above right] at (10,-0.1) {$g_i$};
\node[above] at (15,0) {$g_{i+1}$};
\node[above] at (8,0) {$x$};
\draw[thick] (5,0) to [out=90,in=180] (8,1) to [out=0,in=90] (11,0) to [out=-90,in=0]  (8,-1) to [out=180,in=-90] (5,0);
\draw[->,decorate, decoration={snake}] (1,0) to  [out=-60,in=-120] (5.95,-0.05);
\draw[->,decorate, decoration={snake}] (10,0) to  [out=-60,in=-120] (14.95,-0.05);
\draw[->,decorate, decoration={snake}] (6,0) to  [out=80,in=100] (10,0.08);
\draw[->] (6,0) to [out=-20,in=-160] (7.95,-0.05);
\draw[->] (8,0)to [out=-20,in=-160] (9.95,-0.05);
\node[below] at (3.4,-1.27) {$h_{i-1,m}$};
\node[below] at (12.4,-1.27) {$h_{i+1,m}$};
\node[above] at (8,1.25) {$h_{im}$};
\node[below] at (7,-0.15) {$e_m$};
\node[below] at (9.2,-0.15) {$f_m$};
\end{tikzpicture}
\caption{The graph $([\ga_{i-2},\ga_{i-1}]\cup[\ga_{i-1},\ga_i]\cup[\ga_i,\ga_{i+1}])\vp_m$.}\label{case1Gm}
\end{figure}

Consequently,
$[h_{im}]_{G_m}=[e_mf_m]_{G_m}.$
Doing so for every $m\in \mathbb{N}$ we see that
$$\lim_{m\to\infty}e_mf_m=\lim_{m\to\infty}h_{im}=\eta_i.$$
It follows that
$$\lim_{m\to\infty}h_{1m}\cdots h_{i-1,m}e_mf_mh_{i+1,m}\cdots h_{nm}=\lim_{m\to\infty} w_m=w.$$
Now observe that, for each $m$, the canonical projection $g_{i-2}\cH_{i-1}^{G_m}\twoheadrightarrow\cH_{i-1}$ maps $g_{i-2}$ as well as $g_{i-1}$ to the base point $b_{i-1}$ and hence the path $p_m$ to a path $p'_m$ in $\cH_{i-1}$ starting at the base point $b_{i-1}$. For every $m$, $p'_m$ also ends at the same vertex, say $v$, namely at the image of $\xi$ under the canonical mapping $\ga_{i-2}\cH_{i-1}^{\wh{F_{\fF}}}\twoheadrightarrow \cH_{i-1}$.
Let $e$ be a word labelling a fixed path in $\cH_{i-1}$ from the base point $b_{i-1}$ to  $v$. Then $e_me\inv$ labels a closed path at $b_{i-1}$ in $\cH_{i-1}$, whence $e_me\inv \in H_{i-1}$ for each $m$. Likewise, there exists a word $f$ labelling a path inside $\cH_{i+1}$ from some fixed vertex $u$ to the base point $b_{i+1}$ and  $f\inv f_m\in H_{i+1}$ for each $m\in \mathbb{N}$. Consequently, for every $m$, the word
$$w_m'=h_{1m}\cdots (h_{i-1,m}e_me\inv)ef(f\inv f_mh_{i+1,m})\cdots h_{nm}$$
belongs to $H_1\cdots H_{i-1}efH_{i+1}\cdots H_n$ and
$$\lim_{m\to \infty}w_m'=\lim_{m\to\infty}w_m=w.$$
By the induction hypothesis, $w\in H_1\cdots H_{i-1}efH_{i+1}\cdots H_n$. Hence there exist $h_k\in H_k$ (for $k=1,\dots,i-1,i+1,\dots,n$) such that
$$w=h_1\cdots h_{i-1}efh_{i+1}\cdots h_n.$$
Finally, let us look once more at the word $e_mf_m$ for some fixed $m$, say $m=0$. We know that $e_0f_0$ labels a path $\ga_{i-1}\vp_0\to \ga_i\vp_0$ which runs entirely in $[\ga_{i-1},\ga_i]\vp_0$. Under the canonical mapping $(\ga_{i-1}\vp_0)\cH_i^{G_0}\twoheadrightarrow \cH_i$ that path is mapped to a closed path at base point $b_i$. Consequently, $e_0f_0\in H_i$. Since, by construction, $ee_0\inv \in H_{i-1}$ and $f_0\inv f \in H_{i+1}$ we arrive at
$$w=h_1h_2\cdots\underbrace{h_{i-1}ee_0\inv}_{\in H_{i-1}}\underbrace{e_0f_0}_{\in H_i} \underbrace{f_0\inv fh_{i+1}}_{\in H_{i+1}}\cdots h_n,$$
thus $w\in H_1\cdots H_n$, as required.

Now consider case (2). We have $\ga_0=1$ and $\ga_n=w$. By Lemma \ref{empty}, $w$ admits a factorization $w=v_1v_2\cdots v_n$ such that, for $z_i=v_1\cdots v_i$ (for $0\le i\le n$) then
$$z_i\in [\ga_{i-1},\ga_i]\cap[\ga_i,\ga_{i+1}]\mbox{ for }1\le i\le n-1$$
and for $1\le i\le n$, $[z_{i-1}, z_i]\subseteq[\ga_{i-1},\ga_i]$. Now choose an $A$-generated group $G\in \fF$ with canonical morphism $\vp:\wh{F_{\fF}}\twoheadrightarrow G$ such that
\begin{itemize}
\item $G\twoheadrightarrow T_i$ for all $i$,
\item all $\ga_i\vp$ are pairwise distinct,
\item $\vp$ restricted to $[1,w]$ is injective.
\end{itemize}
We consider the graph
$$([\ga_0,\ga_1]\cup[\ga_1,\ga_2]\cup\cdots\cup[\ga_{n-1},\ga_n])\vp$$
in $\Ga(G)$. For each $i=1,\dots,n-1$,
$$[z_i,\ga_i]\vp\subseteq [\ga_{i-1},\ga_i]\vp\cap[\ga_i,\ga_{i+1}]\vp;$$
let $s_i$ be a word labelling a path $z_i\vp\to\ga_i\vp$ running inside $[z_i,\ga_i]\vp$, and set $s_0=1=s_n$. Then: for each $i=1,\dots,n$ the word $s_{i-1}\inv v_is_i$ labels a path $p_i$ $\ga_{i-1}\vp\to \ga_i\vp$ running entirely inside $[\ga_{i-1},\ga_i]\vp$. Similarly as argued earlier, the canonical projection $(\ga_{i-1}\vp)\cH_i^G\twoheadrightarrow \cH_i$ maps $\ga_{i-1}\vp$ as well as $\ga_i\vp$ to the base point $b_i$. Since $[\ga_{i-1},\ga_i]\varphi\subseteq (\ga_{i-1}\varphi)\cH^G$, the path $p_i$ is thereby mapped to a closed path at base point $b_i$. Consequently, $s_{i-1}\inv v_is_i\in H_i$ for $i=1,\dots, n$. Altogether,
$$w=v_1\cdots v_n=v_1s_1\cdot s_1\inv v_2s_2\cdots s_{n-2}\inv v_{n-1}s_{n-1}\cdot s_{n-1}\inv v_n\in H_1\cdots H_n,$$
as required.
\end{proof}

\section{Constructing  groups with tree-like Cayley graphs}\label{S-extensions}

We present a method to construct inverse sequences $G_0\twoheadleftarrow G_1\twoheadleftarrow G_2\twoheadleftarrow\cdots$ for which the group $\mathcal{G}=\vpl G_n$ has a tree-like Cayley graph. Suppose that $\mathcal{G}$ is an $A$-generated profinite group whose Cayley graph admits a constellation $(\Xi,\ga,\Th)$. Then there exists a finite quotient $G$ with canonical morphism $\vp:\mathcal{G}\twoheadrightarrow G$ such that $(\Xi\vp,\ga\vp,\Th\vp)$ is a constellation in $\Ga(G)$. Moreover, if $\vp$ factors through another finite quotient $H$, say: $\mathcal{G}\overset{\psi}{\twoheadrightarrow}H\overset{\mu}{\twoheadrightarrow}G$, then $(\Xi\psi,\ga\psi,\Th\psi)$ is  a constellation in $\Ga(H)$ and
$$(\Xi\psi\mu,\ga\psi\mu,\Th\psi\mu)=(\Xi\vp,\ga\vp,\Th\vp).$$
In particular, there exist words $u,v\in F$ such that $u$ labels a path $1\to \ga\vp$ running in $\Xi\vp$, $v$ labels a path $1\to \ga\vp$ in $\Th\vp$ and $[u]_H=[v]_H$: simply take words $u,v$ which label paths  $1\to \ga\psi$ in $\Ga(H)$ such that the one labelled by $u$ runs in $\Xi\psi$ and the one labelled by $v$ runs in $\Th\psi$.

Now we come to a crucial definition. Let $(X,g,T)$ be a constellation in the Cayley graph $\Ga(G)$ of the finite $A$-generated group $G$; let $H$ be another $A$-generated group such that $H\twoheadrightarrow G$. Then $H$ \emph{dissolves} the constellation $(X,g,T)$ if, for all pairs of words $(u,v)$ such that $u$ labels a path $1\to g$ running in $X$ and $v$ labels a path $1\to g$ running in $T$, the inequality $[u]_H\ne[v]_H$ holds. If $H$ dissolves the constellation $(X,g,T)$ of $G$ then, in particular,  there does not exist a constellation $(Y,h,Z)$ of $H$ for which $(X,g,T)=(Y\vp,h\vp,Z\vp)$ for the canonical morphism $\vp:H\twoheadrightarrow G$. Moreover, each $A$-generated group $K$ for which $K\twoheadrightarrow H$ then also dissolves the constellation $(X,g,T)$ of $G$. This, together with the earlier discussion implies that $\Ga(\mathcal G)$ is tree-like provided that for each finite quotient $G$ of $\mathcal G$ and each constellation $(X,g,T)$ of $G$  there exists another quotient $H$ of $\mathcal{G}$ which dissolves the constellation $(X,g,T)$. Now, every finite group can have at most finitely many constellations $(X,g,T)$. If, for every such $(X,g,T)$ we have a group $H_{(X,g,T)}$ which dissolves $(X,g,T)$ then, taking the $A$-generated subdirect product $H$ of all such $H_{(X,g,T)}$, we get a group which dissolves all constellations of $G$. Thus, we have already proved the `if'-direction of  the next result.

\begin{Prop}\label{treelike} The Cayley graph $\Gamma(\mathcal G)$ of an $A$-generated profinite group $\mathcal G$ is tree-like if and only if, for each finite quotient $G$ of $\mathcal G$ there exists a finite quotient $H$ of $\mathcal G$ which dissolves all constellations of $G$.
\end{Prop}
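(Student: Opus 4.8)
The \emph{`if'} direction is precisely the discussion preceding the statement, so the plan is to establish the \emph{`only if'} direction: assuming $\Ga(\mathcal G)$ is tree-like, I would produce, for each finite quotient $G$ of $\mathcal G$, a finite quotient $H$ dissolving all constellations of $G$. Since $G$ is finite it has only finitely many constellations, and by the $A$-generated subdirect-product argument already used just above it suffices to dissolve each single constellation separately and then pass to the subdirect product of the dissolving quotients. So I fix one constellation $(X,g,T)$ of $G$ and argue by contradiction, assuming that \emph{no} finite quotient of $\mathcal G$ dissolves it. Writing $\mathcal G=\vpl G_n$ with $G_0=G$ and canonical morphisms $\vp_n\colon\mathcal G\twoheadrightarrow G_n$, the negation of ``dissolves'' yields, for every $n$, words $u_n,v_n\in F$ such that $u_n$ labels a path $1\to g$ running in $X$, $v_n$ labels a path $1\to g$ running in $T$, and $[u_n]_{G_n}=[v_n]_{G_n}$.

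The heart of the plan is to convert this data into a constellation of $\Ga(\mathcal G)$, which contradicts tree-likeness (recall $\Ga(\mathcal G)$ is tree-like iff it admits no constellation). For each $n$ let $X_n$ and $T_n$ be the connected components of $1$ in the preimages of $X$ and $T$ under $\vp_{n0}\colon\Ga(G_n)\twoheadrightarrow\Ga(G)$. Lifting paths shows the bonding maps restrict to \emph{surjections} $X_n\twoheadrightarrow X_m$ and $T_n\twoheadrightarrow T_m$, so $\wh X:=\vpl X_n$ and $\wh T:=\vpl T_n$ are connected profinite subgraphs of $\Ga(\mathcal G)$, both containing $1$. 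Because the path labelled $u_n$ lifts into $X_m$ at every level $m$ (its projection to $\Ga(G)$ runs in $X$, so its lift starting at $1$ stays in the component $X_m$), the vertex $[u_n]_{\mathcal G}$ lies in $\wh X$; likewise $[v_n]_{\mathcal G}\in\wh T$. Passing to a subsequence, compactness of $\mathcal G$ gives limits $[u_n]_{\mathcal G}\to\ga$ and $[v_n]_{\mathcal G}\to\ga'$; since $\vp_n([u_n]_{\mathcal G})=\vp_n([v_n]_{\mathcal G})$ forces $\vp_m(\ga)=\vp_m(\ga')$ for every $m$, I get $\ga=\ga'$. As $\wh X$ and $\wh T$ are closed, $\ga\in\wh X\cap\wh T$, while $\vp_0(\ga)=\lim[u_n]_G=g\neq 1$, so $\ga\neq 1$.

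It then remains to verify condition (3): that $1$ and $\ga$ lie in \emph{distinct} $\wh X\cap\wh T$-components. Here the decisive step is projection by $\vp_0$. Since $X_0=X$ and $T_0=T$, one has $\vp_0(\wh X)\subseteq X$ and $\vp_0(\wh T)\subseteq T$, hence $\vp_0(\wh X\cap\wh T)\subseteq X\cap T$; and $\vp_0$ carries a connected profinite subgraph to a connected (finite) subgraph. Therefore, if $1$ and $\ga$ were joined inside $\wh X\cap\wh T$, their images $1$ and $g$ would be joined inside $X\cap T$, contradicting that $(X,g,T)$ is a constellation of $G$. Consequently $(\wh X,\ga,\wh T)$ is a genuine constellation of $\Ga(\mathcal G)$, contradicting tree-likeness. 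This contradiction shows the chosen constellation \emph{is} dissolved by some finite quotient, and combining over the finitely many constellations of $G$ finishes the argument.

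I expect the main obstacle to be the compactness bookkeeping of the middle step, namely arranging that the two sequences converge to a \emph{common} vertex $\ga$ that genuinely lies in both profinite subgraphs and is distinct from $1$, together with the verification of condition (3). The latter is the truly geometric point: it relies on connectedness being preserved under the projection $\vp_0$, so that a hypothetical path in $\wh X\cap\wh T$ would descend to a path in $X\cap T$ and destroy the constellation downstairs.
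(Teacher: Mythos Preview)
Your argument is correct and follows essentially the same route as the paper: your graphs $\wh X=\vpl X_n$ and $\wh T=\vpl T_n$ are precisely the covering graphs $X^{\mathcal G}$ and $T^{\mathcal G}$ used there, and the paper likewise obtains the common limit $\ga=\lim[u_n]_{\mathcal G}=\lim[v_n]_{\mathcal G}$ and checks condition~(3) by projecting to $\Ga(G)$. The only cosmetic difference is that the paper argues the contrapositive and uses a pigeonhole step (finitely many constellations, so one is undissolved cofinally), whereas you fix the constellation first via the subdirect-product reduction; the two are logically interchangeable.
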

\begin{proof} We only have to prove the `only if'-direction. Suppose that $\mathcal G$ does not fulfill the condition stated in the proposition. Then there exists a finite quotient $G$ of $\mathcal G$ such that for every $H$ with $\mathcal{G}\twoheadrightarrow H\twoheadrightarrow G$ there exists a constellation $(X,g,T)$ of $G$ which is not dissolved by $H$. Let $G=:G_0\twoheadleftarrow G_1\twoheadleftarrow G_2\twoheadleftarrow\cdots$ be an inverse system of quotients of $\mathcal G$ such that $\vpl G_n=\mathcal G$. For every $n$ there exists a constellation $(X_n,g_n,T_n)$ of $G$ which is not dissolved by $G_n$. Since $G$ has only finitely many constellations we may assume that  the constellations $(X_n,g_n,T_n)$ all coincide with a fixed one, say $(X,g,T)$. By definition, for each $n$, there exist words $u_n$ and $v_n$ such that $[u_n]_{G_n}=[v_n]_{G_n}$, $u_n$ labels a path $1\to g$ inside $X$ and $v_n$ labels a path $1\to g$ inside $T$. We may assume (by going to subsequences) that both sequences $([u_n]_{\mathcal G})$ and $([v_n]_{\mathcal G})$ converge, in which case they have the same limit, say $\gamma:=\lim\, [u_n]_{\mathcal G}=\lim\, [v_n]_{\mathcal G}$.  Now consider the covering graphs $X^{\mathcal G}$ and $T^{\mathcal G}$ (both with respect to the base point $1$). Both graphs are connected subgraphs of $\Gamma(\mathcal G)$ and both contain $\gamma$ and $1$. Since the canonical mapping $\Gamma(\mathcal G)\twoheadrightarrow \Gamma(G)$ maps $X^{\mathcal G}$ to $X$, $T^{\mathcal G}$ to $T$, $\gamma$ to $g$ and $1$ to $1$ it follows that $(X^{\mathcal G},\gamma,T^{\mathcal G})$ is a constellation in $\Gamma(\mathcal G)$ whence $\Gamma(\mathcal G)$ is not tree-like.
\end{proof}
We continue by a technical lemma saying that in Proposition \ref{treelike} it is not necessary to consider \textbf{all} finite quotients of $\mathcal G$ but rather a co-final set.

\begin{Lemma}\label{co-final} Let $G\twoheadrightarrow H\overset{\vp}{\twoheadrightarrow}K$ be finite, $A$-generated. If $G$ dissolves all constellations of $H$ then $G$ dissolves all constellations of $K$.
\end{Lemma}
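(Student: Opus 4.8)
The statement to prove is Lemma~\ref{co-final}: given canonical morphisms $G\twoheadrightarrow H\overset{\vp}{\twoheadrightarrow}K$ of finite $A$-generated groups, if $G$ dissolves all constellations of $H$, then $G$ dissolves all constellations of $K$. My strategy is to take an arbitrary constellation $(X,g,T)$ of $K$ and \emph{lift} it to a constellation $(X',h,T')$ of $H$; then invoke the hypothesis that $G$ dissolves the lifted constellation of $H$, and finally \emph{push back down} along $\vp$ to conclude that $G$ dissolves the original constellation of $K$. The key bookkeeping is that dissolving is witnessed by pairs of words $(u,v)$ labelling paths $1\to g$ inside $X$ and inside $T$ respectively, and the two morphisms interact with these labels in a controlled way because $\Gamma(G)\twoheadrightarrow\Gamma(H)\twoheadrightarrow\Gamma(K)$ preserves labels and base points.

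\medskip
First I would construct the lift. Let $(X,g,T)$ be a constellation in $\Ga(K)$. Choose a vertex $h$ of $\Ga(H)$ with $h\vp=g$, and let $X'$ and $T'$ be the connected components containing $1$ of the full preimages $X\vp\inv$ and $T\vp\inv$ that also contain $h$. I must check that $(X',h,T')$ is genuinely a constellation in $\Ga(H)$, i.e.\ that the three defining conditions hold: that $1,h\in X'\cap T'$, and crucially that the $X'\cap T'$-components of $1$ and $h$ are distinct. The last point is where I expect the only real subtlety to lie, so I would argue it by contradiction: if $1$ and $h$ lay in the same $X'\cap T'$-component, there would be a path $1\to h$ in $\Ga(H)$ running entirely inside $X'\cap T'$; applying $\vp$ would give a path $1\to g$ in $\Ga(K)$ running inside $X\cap T$, contradicting condition~(3) of the constellation $(X,g,T)$.

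\medskip
Next comes the descent of witnessing words. Suppose, toward a contradiction, that $G$ does \emph{not} dissolve $(X,g,T)$ in $K$: then there exist words $u,v\in F$ with $u$ labelling a path $1\to g$ inside $X$, $v$ labelling a path $1\to g$ inside $T$, and $[u]_G=[v]_G$. The plan is to show these same words $u,v$ witness a failure of $G$ to dissolve the lifted constellation $(X',h,T')$ in $H$. For this I lift the path labelled $u$ starting at $1$ in $\Ga(H)$; since $\vp$ maps this lifted path onto the path in $X$, the lifted path runs inside $X'$ and, because $H$-paths are determined by their labels in folded graphs and the label pushes down correctly, the lifted $u$-path ends at $h$ (here I use that $[u]_H\vp=[u]_K=g=h\vp$ together with injectivity considerations, or more cleanly that the endpoint is the unique vertex over $g$ lying in $X'$). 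The same argument places the lifted $v$-path inside $T'$ with endpoint $h$. Thus $u$ labels a path $1\to h$ inside $X'$ and $v$ labels a path $1\to h$ inside $T'$, yet $[u]_G=[v]_G$, so $G$ fails to dissolve $(X',h,T')$ in $H$ --- contradicting the hypothesis.

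\medskip
The main obstacle, as flagged above, is verifying that the lifted triple $(X',h,T')$ is truly a constellation of $H$ (condition~(3)), and dually that the endpoints of the lifted $u$- and $v$-paths really coincide at the chosen vertex $h$ rather than at some other preimage of $g$. Both hinge on the same fact: a label-preserving graph morphism $\Ga(H)\twoheadrightarrow\Ga(K)$ sends paths to paths of the same label and sends components to components, so connectivity inside an intersection is reflected and reflected-back appropriately. Once this compatibility is pinned down, the two reductions (lift the constellation, descend the witnesses) are formal, and the proof closes. I would keep the choice of $h$ uniform throughout --- fixing it once at the start --- to avoid any ambiguity about which preimage of $g$ the lifted paths terminate at.
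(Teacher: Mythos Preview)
Your overall architecture (lift the constellation to $H$, then invoke the hypothesis) is sound, but there is a genuine gap in how you choose the vertex $h$. You fix $h$ \emph{before} the witnessing pair $(u,v)$ appears, and then try to argue that the lifted $u$-path in $\Ga(H)$ terminates at that pre-chosen $h$. Your two proposed justifications both fail: $\vp$ is not injective, and there is no reason the connected component $X'$ of $X\vp\inv$ containing $1$ should contain only one preimage of $g$. In general $[u]_H$ will land at \emph{some} preimage of $g$ inside $X'$, but not necessarily at your $h$; likewise for $[v]_H$ inside $T'$. The remedy is to reverse the order: first fix the pair $(u,v)$ with $[u]_G=[v]_G$, observe that $G\twoheadrightarrow H$ forces $[u]_H=[v]_H$, and \emph{then} set $h:=[u]_H=[v]_H$. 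With this choice the lifted paths automatically terminate at $h$, and your argument goes through.

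The paper's proof follows exactly this corrected order, but is more economical in one further respect: rather than lifting $X$ and $T$ to full preimage components $X'$, $T'$, it simply takes $Y$ and $Z$ to be the subgraphs of $\Ga(H)$ spanned by the lifted $u$- and $v$-paths themselves. Since $Y\vp\subseteq X$, $Z\vp\subseteq T$ and $h\vp=g$, any path $1\to h$ inside $Y\cap Z$ would project to a path $1\to g$ inside $X\cap T$, so $(Y,h,Z)$ is a constellation in $H$; the hypothesis on $G$ then gives $[u]_G\ne[v]_G$ directly. This avoids having to reason about connected components of preimages at all, and in particular sidesteps the question (which your write-up leaves implicit) of whether the component of $X\vp\inv$ containing $1$ actually contains $h$.
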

\begin{proof} Let $(X,g,T)$ be a constellation of $K$ and let $u, v$ be words labelling paths $1\to g$ in $X$ and $T$, respectively. If $[u]_H\ne [v]_H$ then also $[u]_G\ne [v]_G$ and we are done. So, assume that $[u]_H=[v]_H=:h$. Let $Y$ be the subgraph of $\Ga(H)$ spanned by the edges of the path $u:1\to h$ and $Z$ be the subgraph of $\Ga(H)$ spanned by the edges of the path $v:1\to h$ in $\Ga(H)$. Then $Y\vp\subseteq X$, $Z\vp\subseteq T$, $h\vp=g$ and therefore $(Y,h,Z)$ is a constellation in $H$. Since $G$ dissolves $(Y,h,Z)$ it follows that $[u]_G\ne[v]_G$, as required.
\end{proof}

We need some further preparations.

\begin{Lemma}\label{FSab} Let $S$ be a finite, simple group and $F_{\mathfrak{for}(S)}(a,b)$ be the $2$-generator free object in the formation $\mathfrak{for}(S)$ generated by $S$. Then $a^mb^n= 1$ only if $a^m=1=b^n$.
\end{Lemma}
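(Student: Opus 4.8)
The plan is to reduce the statement to a one-line computation inside $S$, after identifying the formation $\mathfrak{for}(S)$ and the free object $F_{\mathfrak{for}(S)}(a,b)$ explicitly. Recall that a formation is closed under quotients and finite subdirect products. If $S$ is abelian, so $S\cong\mathbb{Z}/p$, then $\mathfrak{for}(S)$ is exactly the class of elementary abelian $p$-groups and $F_{\mathfrak{for}(S)}(a,b)\cong\langle a\rangle\times\langle b\rangle\cong(\mathbb{Z}/p)^2$; in this case $a^mb^n=1$ forces $a^m=1=b^n$ at once, since the group is the internal direct product of $\langle a\rangle$ and $\langle b\rangle$. So I may assume $S$ is non-abelian simple. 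Here the standard fact that a subdirect product of copies of a non-abelian simple group is a direct power of that group yields $\mathfrak{for}(S)=\{S^k:k\ge 0\}$.

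Next I would pin down the free object and its homomorphisms onto $S$. As the alphabet is fixed of size two, there are only finitely many $2$-generated groups in $\{S^k\}$ (a power $S^k$ is $2$-generated only for boundedly many $k$), so $F_{\mathfrak{for}(S)}(a,b)$ is a finite subdirect product of such powers; it therefore lies in $\mathfrak{for}(S)$ and is residually $S$, the coordinate projections onto the $S$-factors separating its points. The crucial observation is that every homomorphism $\theta\colon F_{\mathfrak{for}(S)}(a,b)\to S$ is either trivial or surjective: its image is a quotient of $F_{\mathfrak{for}(S)}(a,b)\in\mathfrak{for}(S)$, hence again lies in $\mathfrak{for}(S)$, but a subgroup of $S$ isomorphic to some $S^k$ must be trivial or all of $S$. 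Thus the nontrivial homomorphisms onto $S$ are exactly the maps $\theta_{x,y}$ sending $(a,b)$ to a generating pair $(x,y)$ of $S$.

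The heart of the argument is then immediate. Assuming $a^mb^n=1$ in $F_{\mathfrak{for}(S)}(a,b)$, apply an arbitrary $\theta_{x,y}$ to get $x^my^n=1$, that is $x^m=y^{-n}$. This single element is simultaneously a power of $x$ and a power of $y$, so it commutes with both $x$ and $y$; as $\langle x,y\rangle=S$ it is central, and $Z(S)=1$ forces $x^m=1$ and $y^n=1$. Hence $\theta_{x,y}(a^m)=\theta_{x,y}(b^n)=1$ for every generating pair, and trivially for the trivial homomorphism. Since these maps (in particular the coordinate projections) separate the points of the residually-$S$ group $F_{\mathfrak{for}(S)}(a,b)$, I conclude $a^m=1=b^n$.

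The main obstacle is not the final computation, which is short once the setup is in place, but the preliminary identification of $\mathfrak{for}(S)$ and, above all, the claim that every homomorphism $F_{\mathfrak{for}(S)}(a,b)\to S$ is trivial or surjective; this rests on closure of formations under quotients together with the rigidity of subgroups of $S$ isomorphic to powers of $S$, and it is exactly what licenses translating the single relation $a^mb^n=1$ into the family of relations $x^my^n=1$ over all generating pairs. Pleasantly, this route avoids any appeal to the fact that every nontrivial element of $S$ lies in a generating pair: the centrality trick disposes of each generating pair on its own.
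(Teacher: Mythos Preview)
Your proof is correct and rests on the same core idea as the paper's: in the non-abelian case the element $a^m=b^{-n}$ commutes with both generators and is therefore central, while the ambient group is centerless. The paper's argument is shorter because it runs this centrality trick directly inside $F_{\mathfrak{for}(S)}(a,b)$ itself---which, being a direct power of $S$, already has trivial center---rather than projecting to $S$ via all generating pairs and then invoking residual-$S$ to pull the conclusion back.
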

\begin{proof} If $S=C_p$ for some prime $p$ this is clear since $F_\mathfrak{for}(S)(a,b)$ is the (additive group of) the $\mathbb{F}_p$-vector space with basis $\{a,b\}$. So assume that $S$ is non-abelian and let $x\in \left<a\right>\cap\left<b\right>$. Then $x$ commutes with $a$ as well as with $b$ and hence belongs to the center of $F_{\mathfrak{for}(S)}(a,b)$ since $a$ and $b$ generate $F_{\mathfrak{for}(S)}(a,b)$. However, since $F_{\mathfrak{for}(S)}(a,b)$ is a direct power of $S$ this group is centerless, so $x=1$ and $\left<a\right>\cap\left<b\right>=\{1\}$.
\end{proof}

For a finitely generated free group $R$ and a finite simple group $S$ let $R(S)$ be the intersection of all normal subgroups $N$ of $R$ for which $R/N$ is a direct power of $S$. Then $R(S)$ is a characteristic subgroup of $R$ and $R/R(S)$ is the $r$-generator free object in the formation $\mathfrak{for}(S)$ generated by $S$ where $r$ is the rank of $R$. In case $S=C_p$ for a prime $p$ we have $R(S)=R(C_p)=R^p[R,R]$. If $w_1,\dots,w_r$ form a basis of $R$ then $w_1R(S),\dots,w_rR(S)$ form a collection of free generators of $R/R(S)$.

Let $G$ be an $A$-generated finite group, $\vp:F\to G$ be the canonical morphism and $R=\mathrm{ker}(\vp)$. We define $G^{A,S}:= F/R(S)$ and call this group the \emph{$A$-universal $S$-extension of $G$}. For $S=C_p$ this is a classical construction by Gasch\"utz, see \cite[Appendix $\beta$]{doerkhawkes} and \cite{Ballester}. Note that $G^{A,S}$ is an $A$-generated extension of $R/R(S)$ by $G$ and $R/R(S)$ is the $r$-generator free object in the formation generated by $S$ where $r=\vert G\vert(\vert A\vert - 1)+1$; $G^{A,S}$ depends on $S$ \textbf{and} $A$ and enjoys the following universal property: if $H$ is any $A$-generated extension of a member of $\mathfrak{for}(S)$ by $G$ then
$$G^{A,S}\twoheadrightarrow H\twoheadrightarrow G.$$

So we are interested, given a finite $A$-generated group $G$, to find a group $H\twoheadrightarrow G$ which dissolves all constellations of $G$. Since our ultimate goal is to give a sufficient condition for $\wh{F_{\fF}}$ to be tree-like with respect to a free generating set $A$, in view of Lemma \ref{co-final}, it is no harm if we impose a few mild restrictions. Since the cyclic and pro-cyclic case is already clear (every infinite pro-cyclic group has a tree-like Cayley graph \cite{geometry}) we generally assume that $\vert A\vert \ge 2$. Since, among the finite quotients of $\wh{F_{\fF}}$ the relatively free ones form a co-final subset we shall assume that
\begin{equation}\label{assumption}
[a]_G\ne [b]_G\ne 1\mbox{ for all }a,b\in A, a\ne b.
\end{equation}
\begin{Lemma}\label{connected-2} Let $G$ be an $A$-generated finite group subject to (\ref{assumption}) and let $e,f$ be arbitrary positive edges in the Cayley graph $\Ga(G)$; then the graph $\Ga(G)\setminus\{e^{\pm 1},f^{\pm 1}\}$ is connected.
\end{Lemma}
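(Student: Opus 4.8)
The plan is to reformulate the lemma as an edge-connectivity statement and to derive it from the vertex-transitivity of $\Ga(G)$. First I would record the two structural features that assumption (\ref{assumption}) supplies. Left translation $g\mapsto hg$ is a label-preserving automorphism of $\Ga(G)$ acting transitively on the vertex set $G$, so $\Ga(G)$ is vertex-transitive. Moreover, since $[a]_G\ne 1$ for every $a\in A$ and the elements $[a]_G$ ($a\in A$) are pairwise distinct, a short check shows that every vertex is incident with exactly $2|A|$ distinct geometric edges ($|A|$ outgoing edges $(g,a)$ and $|A|$ incoming edges $(g[a]_G\inv,a)$, no two coinciding). Thus $\Ga(G)$ is a connected $2|A|$-regular (multi)graph with $2|A|\ge 4$.

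Deleting $\{e^{\pm1},f^{\pm1}\}$ removes exactly two geometric edges, so the lemma is equivalent to the assertion that no two geometric edges form an edge cut; that is, for every partition $V(\Ga(G))=U\sqcup W$ into two non-empty sets, the number of edges joining $U$ and $W$ is at least $3$. So it suffices to show that the edge-connectivity of $\Ga(G)$ is at least $3$.

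The key elementary observation is a parity fact. For each letter $a$, right translation $g\mapsto g[a]_G$ is a bijection of $G$, and it identifies the $a$-edges running from $U$ to $W$ with the $a$-edges running from $W$ to $U$; hence for each fixed $a$ the number of $a$-edges crossing the cut is even. Summing over $A$, \emph{every} edge cut of $\Ga(G)$ has even cardinality, so the edge-connectivity is at least $2$, and it remains only to exclude a cut of size exactly $2$.

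Finally I would rule out a $2$-cut. Such a cut would be crossed by a single letter $a_0$, with exactly one edge in each direction, while every other letter contributes no crossing edge; the latter forces $U$ to be closed under right multiplication by each $[b]_G$ with $b\ne a_0$, so $U=UK$ for $K=\langle [b]_G : b\ne a_0\rangle$, and $K\ne\{1\}$ because some $[b]_G\ne 1$ by (\ref{assumption}). Analysing the unique $\langle[a_0]_G\rangle$-coset cycle meeting both $U$ and $W$ against the coset decomposition $U=UK$ should then produce more than two crossing edges, a contradiction. This last step is the main obstacle: in the non-abelian case $[a_0]_G$ need not normalise $K$, so tracking the left cosets of $K$ under right translation by $[a_0]_G$ is delicate. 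The cleanest way to close the argument is to invoke the classical theorem that a finite connected vertex-transitive (multi)graph is maximally edge-connected, giving at once that the edge-connectivity of $\Ga(G)$ equals its valency $2|A|\ge 4>2$; the parity remark then serves as a self-contained consistency check rather than carrying the whole proof.
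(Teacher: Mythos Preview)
Your proposal ultimately lands on the same argument as the paper: observe that $\Ga(G)$ is vertex-transitive and invoke the classical result that a connected vertex-transitive graph has edge-connectivity equal to its valency, so that removing two geometric edges cannot disconnect it. The paper does exactly this, citing \cite[Lemma 3.3.3]{graphtheory}; the difference is only in bookkeeping. The paper first passes to an undirected \emph{simple} graph $\Ga^\circ$ (collapsing the double edges that arise from generators of order $2$) and checks that $\Ga^\circ$ has degree at least $3$, treating the dihedral case ($|A|=2$, both generators of order $2$, where $\Ga^\circ$ is a cycle of degree $2$) separately by hand. You instead keep the full multigraph of valency $2|A|\ge 4$ and apply the theorem directly; this is cleaner and absorbs the dihedral case automatically, but you should be explicit that the edge-connectivity result is being used in its multigraph form.

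Two comments on the write-up. First, the parity observation and the attempted direct elimination of a $2$-cut via $U=UK$ are not needed and, as you note yourself, the latter does not close; drop both and go straight to the vertex-transitivity theorem. Second, your phrase ``no two coinciding'' for the $2|A|$ incident edges is correct as a count of distinct edges in the Serre sense, but some of these edges are parallel (same endpoints) when a generator has order $2$ or when $[a]_G=[b]_G^{-1}$ for distinct $a,b$; this is precisely why the multigraph formulation is the right setting for your version of the argument.
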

\begin{proof} The claim is true if $G$ is generated by two elements each of order $2$  (that is, $G$ is a dihedral group). So we may assume that $\vert A\vert \ge 3$ or $\vert A\vert =2$ and at least one generator has order greater than $2$. Let $\Ga^\circ$ be the undirected graph formed from the positive edges of $\Ga(G)$ by ignoring the orientation of these edges. Moreover, from each pair of edges $g\ne g'$ coming from a generator of order $2$ (that is, $\iota g=\tau g'$ and $\iota g'=\tau g$) remove one in order to get a graph rather than a multi-graph. The graph $\Ga^\circ$ is vertex transitive with degree at least $3$. It follows that the edge connectivity of $\Ga^\circ$ is also at least $3$ \cite[Lemma 3.3.3]{graphtheory}. That is, we may remove any two edges from $\Ga^\circ$ and retain a connected graph. It follows that $\Ga(G)\setminus \{e^{\pm 1},f^{\pm 1}\}$ is also connected.
\end{proof}

We are now able to prove the main result of this section. For $S=C_p$ for some prime $p$ this result is known \cite{geometry}.

\begin{Thm} Let $G$ be an $A$-generated finite group subject to the assumption (\ref{assumption}) and let $S$ be a finite simple group. Then $G^{A,S}$ dissolves all constellations of $G$.
\end{Thm}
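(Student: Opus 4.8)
The plan is to recast ``dissolves'' as a statement about the free group $R:=\ker(F\twoheadrightarrow G)$, viewed as the fundamental group $\pi_1(\Ga(G),1)$. Fix a constellation $(X,g,T)$ of $G$ and words $u,v$ labelling paths $1\to g$ inside $X$ and inside $T$ respectively. Since $[u]_G=g=[v]_G$, the loop $\ell:=uv\inv$ lies in $R$, and the desired inequality $[u]_{G^{A,S}}\ne[v]_{G^{A,S}}$ is equivalent to $\ell\notin R(S)$. Recalling that $R(S)$ is the intersection of all $N\trianglelefteq R$ with $R/N$ a \emph{direct power} of $S$, I would prove $\ell\notin R(S)$ by exhibiting a single surjection $\Phi\colon R\twoheadrightarrow M$ onto a direct power $M$ of $S$ with $\Phi(\ell)\ne1$: then $\ker\Phi$ belongs to the defining family of $R(S)$, whence $R(S)\subseteq\ker\Phi$ and $\ell\notin R(S)$. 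It is worth noting at the outset that a merely \emph{non-surjective} map to $S$ detecting $\ell$ is useless, since such a map need not contain $R(S)$ in its kernel; surjectivity onto a power of $S$ is essential.

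For the geometric input I would examine how $u$ and $v$ cross the two subgraphs. Fix a prime $p\mid|S|$. First I would locate a positive edge $e_0\in X\setminus T$ whose net multiplicity $N_0$ in the walk $u$ (traversals of $e_0$ minus traversals of $e_0\inv$) satisfies $p\nmid N_0$. Indeed $u$ meets only edges of $X$; if every edge of $X\setminus T$ occurred in $u$ with multiplicity $\equiv0\pmod p$, then the mod-$p$ $1$-chain carried by $u$ would be supported on $X\cap T$ and have boundary $g-1$, forcing $1$ and $g$ into one component of $X\cap T$, against the constellation hypothesis. The same argument applied to $v$ (or the remark that $v$ cannot remain inside $X\cap T$) yields a positive edge $f\in T\setminus X$ crossed by $v$, with net multiplicity $M$ in $v$. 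Note $e_0\ne f$, that $e_0$ is not crossed by $v$ (as $e_0\notin T$) and that $f$ is not crossed by $u$ (as $f\notin X$).

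Next I would build $\Phi$ with target the $2$-generator free object $B:=F_{\mathfrak{for}(S)}(a,b)$, which is a direct power of $S$. Using Lemma~\ref{connected-2}, choose a spanning tree of $\Ga(G)$ avoiding both $e_0$ and $f$, and let $\Phi$ be the $S$-voltage homomorphism assigning the value $a$ to $e_0$, the value $b$ to $f$, and the value $1$ to every other edge. As all tree edges carry voltage $1$, the image of $\Phi$ is generated by the voltages of the non-tree edges, namely $\langle a,b\rangle=B$, so $\Phi$ is onto. Because $e_0$ is the only nontrivially labelled edge met by $u$ and $f$ the only one met by $v$, the value of $\Phi$ on the loop is clean:
\[
\Phi(\ell)=\Phi(u)\,\Phi(v)\inv=a^{N_0}b^{-M}.
\]
Since some map $B\twoheadrightarrow S$ sends the free generator $a$ to an element of order $p$, we have $p\mid\operatorname{ord}(a)$, and $p\nmid N_0$ then gives $a^{N_0}\ne1$. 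Lemma~\ref{FSab} applied inside $B$ now forces $a^{N_0}b^{-M}\ne1$, so $\Phi(\ell)\ne1$ and $\ell\notin R(S)$, as required.

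The step I expect to be the real obstacle is exactly the one just negotiated: producing a $\Phi$ that is genuinely onto a \emph{power} of $S$ while still separating $u$ from $v$. The naive recipe of placing one nontrivial voltage on $e_0$ detects $\ell$ but has cyclic image, which fails because cyclic groups do not lie in $\mathfrak{for}(S)$ for non-abelian $S$; moreover a single edge may be crossed a number of times divisible by $\exp(S)$, so raising one generator to a power need not survive. The device that resolves both difficulties simultaneously is to spend the second generator $b$ on an edge of $T\setminus X$: this makes $\Phi$ surjective onto $B$ and, crucially, keeps $\Phi(\ell)$ in the special shape $a^{N_0}b^{-M}$ to which Lemma~\ref{FSab} applies. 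Finding an edge in each of $X\setminus T$ and $T\setminus X$ is precisely where the constellation hypothesis (distinct $X\cap T$-components of $1$ and $g$) enters, Lemma~\ref{connected-2} is what allows both chosen edges to be made non-tree, and Lemma~\ref{co-final} reduces everything to groups $G$ subject to~(\ref{assumption}), under which Lemma~\ref{connected-2} is in force.
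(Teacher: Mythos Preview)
Your proof is correct and follows essentially the same architecture as the paper's: locate one edge in $X\setminus T$ and one in $T\setminus X$ carrying nontrivial net multiplicity, invoke Lemma~\ref{connected-2} to push both off a spanning tree, send the corresponding free generators of $R$ to $a,b\in F_{\mathfrak{for}(S)}(a,b)$ and everything else to $1$, and finish with Lemma~\ref{FSab}. The only cosmetic differences are that you find your edge $e_0$ by a mod-$p$ homological argument (if every edge of $X\setminus T$ had multiplicity $\equiv 0$, the $\mathbb{F}_p$-chain of $u$ would live in $X\cap T$ and witness a path from $1$ to $g$ there) whereas the paper counts signed crossings of the explicit border $D\cup C$ between the $X\cap T$-component $Z$ of $1$ and its complement in $X$; and you work with a prime $p\mid|S|$ while the paper uses the full order $o$ of a free generator of $F_{\mathfrak{for}(S)}(a,b)$. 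Your remark that surjectivity of $\Phi$ is essential is slightly overstated: since $B\in\mathfrak{for}(S)$, any homomorphism $R\to B$ factors through $R/R(S)$ regardless of its image, so $\Phi(\ell)\ne 1$ already gives $\ell\notin R(S)$; but your surjectivity claim is true and harmless.
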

\begin{proof} Let $(X,g,T)$ be a constellation of $G$ and $u,v\in F$ be words such that $[u]_G=g=[v]_G$ and $u$ labels a path $1\to g$ inside $X$, $v$ labels a path $1\to g$ inside $T$. We need to show that $[u]_{G^{A,S}}\ne[v]_{G^{A,S}}$, that is, $[uv\inv]_{G^{A,S}}\ne 1$.

The group $G^{A,S}$ is an extension of $R/R(S)$ by $G$ where $R$ is the kernel of the canonical morphism $F\twoheadrightarrow G$, that is, $R$ is an absolutely free group of rank $r=\vert G\vert(\vert A\vert -1)+1$ and $R/R(S)$ is a relatively free group of rank $r$ in the formation $\mathfrak{for}(S)$ generated by $S$. From $[uv\inv]_G=1$ it follows that $uv\inv \in R$; moreover $R$ is a finitely generated subgroup of $F$ with core-graph $\cR=\Ga(G)$ with base point $1$. We actually need to show that $[uv\inv]_{R/R(S)}\ne 1$.

We know how to construct a basis of $R$: for every spanning tree $\Upsilon$ of $\Ga(G)$ there exists a basis $B_\Upsilon$ whose elements are in bijective correspondence to the positive edges of $\Ga(G)\setminus \Upsilon$. We are going to select a tree which is suitable for our purpose. We shall argue in the same way as in \cite{mytype2,constructive}. For a path $\pi$ in $\Ga(G)$ and a positive edge $e$ denote by $\pi(e)$ the number of signed traversals of $e$ by $\pi$ (that is, whenever $\pi$ traverses $e$ in the forward direction this counts $+1$, in the backward direction $-1$).

Let $Z$ be the connected $X\cap T$-component containing $1$ and let $D$ be the set of all positive edges in $X$ with initial vertex in $Z$ and terminal vertex not in $Z$; likewise, let $C$ be the set of positive edges of $X$ with initial vertex not in $Z$ and terminal vertex in $Z$. As in \cite{mytype2}, $D\cup C\ne \emptyset$ and the edges of $D\cup C$ form a border that must be traversed by any path $\pi:1\to g$ inside $X$ one times more often in the forward direction than in the backward direction. That is, for any such path $\pi$,
\begin{equation}\label{borderX}
\sum_{e\in D}\pi(e)-\sum_{e\in C}\pi(e)=1.
\end{equation}
Analogously, let $D'$ be the set of positive edges in $T$ with initial vertex in $Z$ and terminal vertex in $T\setminus Z$, and $C'$ be the set of positive edges in $T$ with terminal vertex in $Z$ and initial vertex in $T\setminus Z$. For every path $\pi:1\to g$ inside $T$,
\begin{equation}\label{borderT}
\sum_{e\in D'}\pi(e)-\sum_{e\in C'}\pi(e)=1.
\end{equation}
The situation is depicted in Figure \ref{constellation}.
\begin{figure}[ht]
\begin{tikzpicture}[yscale=0.7]
\draw[thick] (0,0) -- (1,1) -- (2,0) -- (1,-1) -- (0,0);
\draw[thick] (6,0) -- (7,1) -- (8,0) -- (7,-1) -- (6,0);
\draw[thick] (1,1) -- (1.5,1.5); \draw[thick] (2,0) -- (2.5,0.5);
\draw[thick] (2,0) -- (2.5,-0.5);
\draw[thick] (1,-1) -- (1.5,-1.5);
\draw[thick] (1.5,1.5) to [out=45, in=135] (7,1);
\draw[thick] (2.5,0.5) to [out=45, in=135] (6,0);
\draw[thick] (2.5,-0.5) to [out=-45, in=-135] (6,0);
\draw[thick] (1.5,-1.5) to [out=-45, in = -135] (7,-1);
\draw (1.2,0.8)--(1.7,1.3);
\draw (1.4,0.6)--(1.9,1.1);
\draw (1.6,0.4)--(2.1,0.9);
\draw (1.8,0.2)--(2.3,0.7);
\draw (1.2,-0.8)--(1.7,-1.3);
\draw (1.4,-0.6)--(1.9,-1.1);
\draw (1.6,-0.4)--(2.1,-0.9);
\draw (1.8,-0.2)--(2.3,-0.7);
\node at (0.8,0) {$\bullet$};
\node at (7,0) {$\bullet$};
\node at (4,2) {$X$};
\node at (4,-2) {$T$};
\node[below] at (0.8,0) {$1$};
\node[below] at (7,0) {$g$};
\node[right,rotate =-35,scale=0.9] at (1.7,1.7) {$D\cup C$};
\node[right,rotate=35,scale=0.86] at (1.63,-1.7) {$D'\cup C'$};
\node at (1.5,0) {$Z$};
\end{tikzpicture}
\caption{Constellation $(X,g,T)$, borders $D\cup C$ and $D'\cup C'$.}\label{constellation}
\end{figure}

From the definition it is immediately clear that $(D\cup C)\cap (D'\cup C')=\emptyset$. Now let $o$ be the order of a free generating element of $F_{\mathfrak{for}(S)}(a,b)$, the $2$-generator free object in $\mathfrak{for}(S)$, and let $\wh u$ and $\wh v$ be the paths $1\to g$ in $X$ and $T$, labelled $u$ and $v$, respectively. By (\ref{borderX}, \ref{borderT}) there exist $e\in D\cup C$ and $f\in D'\cup C'$ such that $\wh u(e)$ and $\wh v(f)$ both are not divisible by $o$, and $e\ne f$ since $(D\cup C)\cap(D'\cup C')=\emptyset$. By Lemma \ref{connected-2} the graph $\Ga(G)\setminus\{e^{\pm 1},f^{\pm 1}\}$ is connected, so we my choose a spanning tree $\Upsilon$ of $\Gamma(G)$ which does not contain $e$ and $f$. For a positive edge $h\in \Ga(G)\setminus \Upsilon$  let $\til h$ be the label of the reduced path obtained by running inside $\Upsilon$ from $1$ to $\iota h$, traversing $h$ (in the forward direction) and then running back from $\tau h$ to $1$ inside $\Upsilon$. The basis $B_\Upsilon$ then consists of all $\til h$, $h$ a positive edge  in $\Gamma(G)\setminus\Upsilon$. The label of every closed path at $1$ in $\Ga(G)$  can be expressed uniquely as a reduced product of elements of $B_\Upsilon\cup B_\Upsilon\inv$.  For a given closed path $\pi=e_1e_2\dots e_k$ we know how to express its label as such a product: simply replace each positive edge $e_i$ by $\til{e_i}$ provided $e_i\notin \Upsilon$ and by $1$ if $e_i\in \Upsilon$; in case $e_i$ is a negative edge replace it by $\til{e_i\inv}\inv$ if $e_i\inv\notin\Upsilon$ and by $1$ if $e_i\inv \in \Upsilon$.
 Doing so for the path labelled $uv\inv$ we observe that $uv\inv =w(u)\cdot w(v\inv)$ where
\begin{itemize}
\item $w(u)$ and $w(v\inv)$ both are products of members of $B_\Upsilon\cup B_\Upsilon\inv$,
\item $w(u)$ does not contain $\til{ f}^{\pm 1}$ and $w(v\inv)$ does not contain $\til{ e}^{\pm 1}$.
\end{itemize}
Setting
\begin{itemize}
\item $u(e):=$ the sum of exponents of $\til e$ in $w(u)$,
\item $-v(f):=$ the sum of exponents of $\til f$ in $w(v\inv)$
\end{itemize}
we have that $u(e)=\wh u(e)$ and $v(f)=\wh v(f)$ and therefore $ o\mathrel{\not{\mid}} u(e)$ and $o\mathrel{\not{\mid}} v(f)$.
Consider now   the morphism $\vp:R\to F_{\mathfrak{for}(S)}(a,b)$ determined by the map $\til e\mapsto a$, $\til f\mapsto b$, $\til h\mapsto 1$ for $h\ne e,f$. Then
$$(uv\inv)\vp=(w(u))\vp\cdot (w(v\inv))\vp=a^{u(e)}b^{-v(f)}\ne 1$$
by Lemma \ref{FSab} since $o\mathrel{\not{\mid}} u(e)$ and $o\mathrel{\not{\mid}} v(f)$. Since $\vp$ factors through $R/R(S)$ we get
$$[uv\inv]_{R/R(S)}\psi=(uv\inv)\vp=a^{u(e)}b^{-v(f)}\ne 1$$
where $\psi:R/R(S)\to F_{\mathfrak{for}(S)}(a,b)$ is the morphism determined by $${\til e}R(S)\mapsto a,\ {\til f}R(S)\mapsto b,\ {\til h}R(S)\mapsto 1\ (h\ne e,f).$$ It follows that $[uv\inv]_{R/R(S)}\ne 1$ whence $[uv\inv]_{G^{A,S}}\ne 1$.
\end{proof}

\begin{Cor}\label{sufficient} Let $\mathcal G=\vpl G_i$ be an $A$-generated profinite group; if for each $i$ there exists a simple group $S_i$ with $\mathcal{G}\twoheadrightarrow G_i^{A,S_i}$ then $\Ga(\mathcal{G})$ is tree-like.
\end{Cor}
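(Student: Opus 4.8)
The plan is to read the statement off from Proposition \ref{treelike}, the main theorem of this section, and the cofinality Lemma \ref{co-final}. By Proposition \ref{treelike}, it suffices to produce, for every finite quotient $G$ of $\mathcal G$, a finite quotient $H$ of $\mathcal G$ dissolving all constellations of $G$. Since $\mathcal G=\vpl G_i$, every finite quotient of $\mathcal G$ factors through some $G_i$, so the family $\{G_i\}$ is cofinal; and by Lemma \ref{co-final} a dissolver for $G_i$ is automatically a dissolver for every further quotient of $G_i$. Thus it is enough to dissolve the constellations of the groups $G_i$ themselves. Here the natural candidate is $H=G_i^{A,S_i}$: by hypothesis $\mathcal G\twoheadrightarrow G_i^{A,S_i}$, so $G_i^{A,S_i}$ is indeed a finite quotient of $\mathcal G$, and by construction $G_i^{A,S_i}\twoheadrightarrow G_i$; by the Theorem above (that $G^{A,S}$ dissolves all constellations of $G$ whenever $G$ satisfies (\ref{assumption})), $G_i^{A,S_i}$ dissolves all constellations of $G_i$ --- \emph{provided $G_i$ satisfies} (\ref{assumption}).

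The hard part, and the only genuine obstacle, is precisely this proviso: the Theorem above is stated under assumption (\ref{assumption}), whereas \emph{a priori} an individual $G_i$ need not separate the generators. I would dispose of it by first observing that the hypothesis forces $[a]_\mathcal{G}\ne[b]_\mathcal{G}$ for distinct $a,b\in A$ and $[a]_\mathcal{G}\ne 1$. Indeed, were $[a]_\mathcal{G}=[b]_\mathcal{G}$ (resp.\ $[a]_\mathcal{G}=1$), then $ab\inv$ (resp.\ $a$) would lie in $\ker(F\twoheadrightarrow G_i^{A,S_i})=R_i(S_i)$ for every $i$, where $R_i=\ker(F\twoheadrightarrow G_i)$. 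But in the Stallings basis of $R_i$ attached to a spanning tree of $\cR=\Ga(G_i)$, the element $ab\inv$ is (the inverse of) a product of at most two distinct basis elements and $a$ is a single basis element; mapping the two relevant basis elements to the generators of $F_{\mathfrak{for}(S_i)}(a,b)$ and the rest to $1$, Lemma \ref{FSab} shows such an element is nontrivial in the relatively free group $R_i/R_i(S_i)$, a contradiction. Hence $\mathcal G$ separates the generators; since $\mathcal G=\vpl G_i$ and there are only finitely many pairs to separate, all $G_i$ with $i$ large enough satisfy (\ref{assumption}), and passing to this cofinal subfamily costs nothing.

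With this reduction in hand the assembly is routine. Given an arbitrary finite quotient $G$ of $\mathcal G$, choose $i$ large enough that $G_i\twoheadrightarrow G$ and $G_i$ satisfies (\ref{assumption}). Then $H:=G_i^{A,S_i}$ is a finite quotient of $\mathcal G$ with $H\twoheadrightarrow G_i\twoheadrightarrow G$; by the Theorem $H$ dissolves all constellations of $G_i$, and by Lemma \ref{co-final} it therefore dissolves all constellations of $G$. Proposition \ref{treelike} now yields that $\Ga(\mathcal G)$ is tree-like. I expect the separation argument of the middle paragraph --- verifying that (\ref{assumption}) may be assumed on a cofinal subfamily of the $G_i$ --- to be the only step requiring real care; everything else is a direct concatenation of Proposition \ref{treelike}, the Theorem, and Lemma \ref{co-final}.
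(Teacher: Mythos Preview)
Your proof is correct and follows exactly the route the paper intends: the Corollary is stated without proof, as an immediate consequence of Proposition \ref{treelike}, Lemma \ref{co-final}, and the preceding Theorem. Your treatment of assumption (\ref{assumption}) is in fact more careful than the paper's. The paper justifies (\ref{assumption}) only in the intended application $\mathcal G=\wh{F_\fF}$, by remarking that the relatively free quotients form a cofinal family; it then states the Corollary for a general $A$-generated profinite group $\mathcal G$ without further comment. Your observation---that the hypothesis $\mathcal G\twoheadrightarrow G_i^{A,S_i}$ already forces $\mathcal G$ to separate the generators, because a short word like $ab\inv$ or $a$ lying in $R_i$ survives in $R_i/R_i(S_i)$ via the obvious map to $F_{\mathfrak{for}(S_i)}(a,b)$ and Lemma \ref{FSab}---is correct and closes this small gap for arbitrary $\mathcal G$.
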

For a prime $p$, there is the notion of \emph{pro-$p$-tree} which is homologically defined \cite{Mel, RZ3}. From \cite[Theorem 3.9]{AWsurvey}, see also \cite[Theorem 9.6]{geometry}, one gets that $\Gamma(\mathcal G)$ is a pro-$p$-tree if and only if $\mathcal{G}\twoheadrightarrow G_i^{A,C_p}$ for \textbf{every} $i$.

We are motivated to call a formation $\fF$  \emph{locally extensible} if for every $A$-generated member $G$ of $\fF$ there exists a simple group $S$ such that $G^{A,S}$ also belongs to $\fF$. This seems to be the adequate analogue for formations of a notion introduced for varieties in \cite{geometry}. All formations closed under taking normal subgroups and extensions (called $NE$-formations in  \cite{RZbook1}) are locally extensible, but the converse is not true.
\begin{Cor} Every locally extensible formation $\fF$ is arboreous and therefore Hall.
\end{Cor}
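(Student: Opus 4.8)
The plan is to read off arboreousness straight from Corollary~\ref{sufficient} and then deduce the Hall property by a one-line formal argument. Fix a finite alphabet $A$. The cases $\vert A\vert\le 1$ produce a (pro-)cyclic free pro-$\fF$-group and are covered by the remarks made just before assumption~(\ref{assumption}), so I will assume $\vert A\vert\ge 2$. Write the free pro-$\fF$-group as an inverse limit $\wh{F_\fF}=\vpl G_i$ of its finite continuous quotients; each $G_i$ is then a finite, $A$-generated group lying in $\fF$. The entire argument reduces to verifying the single hypothesis of Corollary~\ref{sufficient}: that for every index $i$ there is a finite simple group $S_i$ with $\wh{F_\fF}\twoheadrightarrow G_i^{A,S_i}$.

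First I would invoke local extensibility. Since $G_i$ is an $A$-generated member of $\fF$, there is by definition a finite simple group $S_i$ with $G_i^{A,S_i}\in\fF$. Next I would note that $G_i^{A,S_i}$ is a \emph{finite} $A$-generated group: it equals $F/R(S_i)$ where $R$ has finite index in $F$ and $R/R(S_i)$ is the finite relatively free object of rank $\vert G_i\vert(\vert A\vert-1)+1$ in $\mathfrak{for}(S_i)$. Being a finite $A$-generated group in $\fF$, it is one of the defining finite quotients of $\wh{F_\fF}$, so the universal property of the free pro-$\fF$-group furnishes a canonical projection $\wh{F_\fF}\twoheadrightarrow G_i^{A,S_i}$. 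With this hypothesis confirmed for every $i$, Corollary~\ref{sufficient} gives that $\Ga(\wh{F_\fF})$ is tree-like; as $A$ was an arbitrary finite alphabet, $\fF$ is arboreous by definition. Should one wish to stay literally within the hypotheses of the preceding Theorem (which assumes~(\ref{assumption})), it suffices, by Lemma~\ref{co-final} together with the cofinality of the relatively free finite quotients observed before~(\ref{assumption}), to run the inverse system through those $G_i$ that satisfy~(\ref{assumption}).

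It remains to pass from arboreous to Hall. By Theorem~\ref{arboreous}, being arboreous is equivalent to the product $HK$ of any two $\fF$-extendible subgroups $H,K$ of $F$ being pro-$\fF$-closed. Taking $K=H$ and using $HH=H$ shows that every $\fF$-extendible subgroup $H$ is itself pro-$\fF$-closed, which is precisely the definition of $\fF$ being Hall (in agreement with the remark preceding the Ribes--Zalesski{\u\i}-Theorem). I expect no genuine obstacle here: all the substantive content --- that $G_i^{A,S_i}$ dissolves the constellations of $G_i$, and that dissolving constellations cofinally forces tree-likeness --- is already packaged into the last Theorem and into Corollary~\ref{sufficient}. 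The only points requiring care are bookkeeping ones, namely confirming that $G_i^{A,S_i}$ is genuinely a finite continuous quotient of $\wh{F_\fF}$ so that Corollary~\ref{sufficient} applies, and that the reduction to $\vert A\vert\ge 2$ is legitimate; both are routine.
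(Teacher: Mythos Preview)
Your proof is correct and follows precisely the route the paper intends: the corollary is stated without proof because it is an immediate consequence of Corollary~\ref{sufficient} together with the definition of ``locally extensible,'' and you have spelled this out carefully, including the bookkeeping that $G_i^{A,S_i}$ is a finite $A$-generated member of $\fF$ and hence a continuous quotient of $\wh{F_\fF}$. The only minor deviation is your derivation of Hall from arboreous via $HH=H$ and Theorem~\ref{arboreous}; the paper instead relies (implicitly, and in the Remark before the Ribes--Zalesski{\u\i}-Theorem) on the graph-theoretic fact that tree-like implies Hall together with Theorem~\ref{Hallformations}, but your argument is equally valid and arguably cleaner.
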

The author is not aware of an example of an arboreous formation which is not locally extensible. The problems of whether there exist Hall varieties which are not arboreous and arboreous varieties which are not locally extensible have been formulated in \cite{geometry,power,supersolvable}. The problem to classify all Hall formations has been proposed in \cite{ballpinxaro}.
\begin{Problem} Does there exist an arboreous formation (or variety) which is not locally extensible?
\end{Problem}

Finally, let us consider an example. For a simple non-abelian group $S$ (and $\vert A\vert \ge 2$) let $R_1:=F(S)$ and  $R_n=R_{n-1}(S)$ for $n>1$. Let $\mathfrak{S}$ be the formation consisting of all finite groups all of whose composition factors are isomorphic with $S$. Then $\wh{F_{\fS}}= \vpl F/R_n$. By Corollary \ref{sufficient},  the Cayley graph $\Ga(\wh{F_{\fS}})$ is tree-like and $\fS$ is arboreous. However, $\wh{F_{\fS}}$ is perfect. Let $a\in A$; then the closed normal subgroup of $\wh{F_{\fS}}$ generated by $A\setminus\{a\}$ is the whole group $\wh{F_{\fS}}$ since otherwise it would have a non-trivial abelian quotient. It follows that the answer to Problem 7.8 in \cite{geometry} is ``No'', so Problem 7.7 (in the same paper) cannot be attacked by the approach proposed there.
\vskip0.05cm
\noindent\textbf{Acknowledgement}: The author would like to thank A.~Ballester-Bolinches for inviting him to the Department of Algebra at the University of Valencia; he is grateful for having had stimulating discussions with him  and J.~Cossey.

\end{document}